\gdef\n@te#1#2{\leavevmode\vadjust{%
 {\setbox\z@\hbox to\z@{\strut#1}%
  \setbox\z@\hbox{\raise\dp\strutbox\box\z@}\ht\z@=\z@\dp\z@=\z@%
  #2\box\z@}}}
\gdef\leftnote#1{\n@te{\hss#1\quad}{}}
\gdef\rightnote#1{\n@te{\quad\kern-\leftskip#1\hss}{\moveright\hsize}}
\gdef\?{\FN@\qumark}
\gdef\qumark{\ifx\next"\DN@"##1"{\leftnote{\rm##1}}\else
 \DN@{\leftnote{\rm??}}\fi{\rm??}\next@}}
\DeclareFontFamily{OT1}{wncyr}{\hyphenchar\font45
}
\DeclareFontShape{OT1}{wncyr}{m}{n}{%
   <5> <6> <7> <8> <9> gen * wncyr
   <10> <10.95> <12> <14.4> <17.28> <20.74>  <24.88>wncyr10}{}
\DeclareFontShape{OT1}{wncyr}{m}{it}{%
   <5> <6> <7> <8> <9> gen * wncyi
   <10> <10.95> <12> <14.4> <17.28> <20.74> <24.88> wncyi10}{}
\DeclareFontShape{OT1}{wncyr}{m}{sc}{%
   <5> <6> <7> <8> <9> <10> <10.95> <12> <14.4>
   <17.28> <20.74> <24.88>wncysc10}{}
\DeclareFontShape{OT1}{wncyr}{b}{n}{%
   <5> <6> <7> <8> <9> gen * wncyb
   <10> <10.95> <12> <14.4> <17.28> <20.74> <24.88>wncyb10}{}
\def\rus{\usefont{OT1}{wncyr}{m}{n}\cyracc\fontsize{9}{10pt}\selectfont}
\def\rusit{\usefont{OT1}{wncyr}{m}{it}\cyracc\fontsize{9}{10pt}\selectfont}
\theoremstyle{plain}
\newtheorem{theorem}{Theorem}[section]
\newtheorem{proposition}[theorem]{Proposition}
\newtheorem{lemma}[theorem]{Lemma}
\newtheorem{remark}[theorem]{Remark}
\newtheorem{remarks}[theorem]{Remarks}
\newtheorem{corollary}[theorem]{Corollary}
\theoremstyle{definition}
\newtheorem{definition}[theorem]{Definition}
\newtheorem{nothing*}[theorem]{}
\newtheorem{subnothing*}[sub]{}
\newtheorem{example}[theorem]{Example}
\theoremstyle{remark}
\def\g{{\mathfrak g}}
\def\a{{\mathfrak a}}
\def\b{{\mathfrak b}}
\def\z{{\mathfrak z}}
\def\t{{\mathfrak t}}
\def\s{{\mathfrak s}}
\def\c{{\mathfrak c}}
\def\cd{\hskip -.5mm\cdot\hskip -.5mm}
\newcommand{\ds}{{\raisebox{-.7\height}{$\cdot$}}\hskip -.65mm
{\raisebox{-.17\height}{$\cdot$}}\hskip -.65mm
{\raisebox{0.36\height}{$\cdot$}}\hskip -.65mm
{\raisebox{.9\height}{$\cdot$}}}
\begin{document}

\title[Modality of representations]{Modality of representations}

\author[Vladimir  L. Popov]{Vladimir  L. Popov${}^*$}
\address{Steklov Mathematical Institute,
Russian Academy of Sciences, Gubkina 8,
Moscow 119991, Russia}
\address{National Research University\\ Higher School of Economics\\ Myasnitskaya
20\\ Moscow 101000,\;Russia}
\email{popovvl@mi.ras.ru}

\thanks{
 ${}^*$\,This work was supported by the Russian Foundation
 for Basic Research, project no.\;15-01-02158.}

\dedicatory{To the memory of Bert Kostant}

\maketitle

\begin{abstract}
We first establish
several
general properties of modality of algebraic group actions.\;In particular, we introduce the notion of a modali\-ty-regular action and prove that every visible action is moda\-li\-ty-regular.
Then, using these results,
we classify irreducible linear representations of con\-nec\-ted simple algebraic groups of
every fixed modality $\leqslant 2$.\;Next, exploring
a finer geometric structure of linear actions, we generalize to the case of any cyclically graded semisimple Lie algebra the notion of a packet (or a Jordan/decomposition class)  and establish the properties of packets.
\end{abstract}

\section{\bf Introduction}

The modality of a group action is the maximal number of parameters
 on which a family of orbits may depend.\;This notion, as a natural measure of complexity of a group action, goes back to V.~I.~Arnold's works on the theory of singularities in which the actions of diffeomorphism groups
on the spaces of functions have been explored.\;V.~I.~Arnold and his collaborators succeeded in classifying the cases of a small modality (0 and 1); this led to the famous lists of singularities that enjoy remarkable properties; see \cite{A75}.

The concept of modality naturally adapts  to the
setting of algebraic group actions on algebraic varieties \cite{V86}, \cite[Sect.\;5.2]{PV94}.\;As in the theory of singularities, this concept allows one to systematically approach the classifica\-ti\-on problem of algebraic group actions by the degree of complexity. Here we explore the concept of modality in this setting.

In Section\;\ref{secmo} we first discuss general  properties of the modality of algebraic group actions; in particular, we introduce the notion of a modality-regular action
and prove that every visible action is modality-regular.\;Then we consi\-der the basic  class of algebraic group actions,
namely, that of the linear actions on (finite-dimensional) vector spaces.\;Guided by the analogy with V.~I.~Arnold's standpoint in the theory of singularities and with a view of obtaining the distinguished classes of linear actions,
we consider the problem of classifying linear actions (representations)
of a small modality and classify all irreducible representations of simple algebraic groups of every fixed modality $\leqslant 2$.\;They are, indeed, turn out to be remarkable because of allowing some nice equivalent characterizations.\;Actually, for modality $0$ the classification is not new:
the definition of modality implies that the class
of representations of modality $0$ coincides with that of representations with finitely many orbits; it has been given much attention to the latter in the literature, in particular, all  irreducible representations of reductive groups from this class have been listed \cite{SK77}, \cite{Ka80}, \cite{KKY86}.\;For simple groups, however, the nice characteriza\-ti\-ons are possible, which for all reductive groups are no longer satisfied (see below Remark \ref{rem}).

In Section\;\ref{secpac},
we explore the finer properties of the geometry of linear actions. Name\-ly,
a finer study of actions presupposes finding not only maximal number of parameters on which a family of orbits may depend (i.e., the modality),
 but also describing all maximal families of orbits (i.e., the sheets), and, where possible, presenting varieties as the disjoint unions of the finer families
which have a standard structure and the better controlled geometric properties.\;For the adjoint representations of semisimple algebraic groups, and the isotropy representations of symmetric spaces, the solutions to these finer study problems are known, see  \cite[Chap.\;39]{TY05} and the references therein.\;In these cases, the latter finer families are the Jordan classes (also known as the decomposition classes and the packets).\;We generalize this notion (using the term ``packet'') to the case of any
cyclically graded semisimple Lie algebra (or $\theta$-group, in the terminology of
\cite{V76}, \cite{Ka80}).\;We describe all packets and explore their properties; in particular, we  find their dimensions and modality.

 As the base field we fix an algebraically closed field $k$ of characteristic zero. Below we freely use the standard notation and terminology of algebraic group theory and invariant theory from
\cite{B91} and \cite{PV94}, where also the proofs of unreferenced claims and/or the relevant references can be found.\;All considered actions of algebraic groups on algebraic varieties and all homomorphisms of algebraic groups (in particular, representations) are assumed to be algebraic (i.e., regular/morphic).\;Topological terms are
related to the Zariski topology.

The results of Section\;\ref{secmo} are partly announced in \cite{P171}.

\section{\bf Modality}\label{secmo}

\subsection{}
 Let $G$ be a connected algebraic group.
We call any irreducible algebraic variety $F$ endowed with an action of $G$ such that all $G$-orbits in $F$ have the same dimension $d$ a {\it family} of $G$-orbits depending on
\begin{equation}\label{mo}
{\rm mod}(G:F):=\dim F-d
\end{equation}
parameters; the integer ${\rm mod}(G:F)$ is called the {\it modality} of $F$.\;If $F\dashrightarrow F\ds G$ is a rational quotient
of this action (which exists by the Rosenlicht  theorem), then
\begin{equation}\label{mtd}
{\rm mod}(G:F)=\dim F\ds G={\rm tr\,deg}_kk(F)^G
\end{equation}
and $F\ds G$ may be informally viewed as
the variety parametrizing typical $G$-orbits in\;$F$.

Given an  algebraic variety $X$ endowed with an action of $G$, we denote by ${\mathscr F}(X)$ the set of all locally closed $G$-stable subsets of $X$ which are families. The integer
 \begin{equation}\label{mod}
 {\rm mod}(G:X):=\underset{F\in {\mathscr F}(X)}{\rm max}{\rm mod}(G:F),
 \end{equation}
 is then called the {\it modality} of $X$.\;If $X$ is a vector space and the action  is linear determined by a representation $\varrho\colon G\to {\rm GL}(X)$, then we call ${\rm mod}(G:X)$ the {\it modality of representation} $\varrho$ and denote it by ${\rm mod}\,\varrho$.

 If $Y$ is an algebraic variety endowed with an action of a
  (not necessarily connected) algebraic group $H$ and if $H^0$ is the identity component of $H$, then, by defi\-ni\-ti\-on\footnote{This definition fixes the inaccuracy in  \cite{V86}, \cite[Sect.\;5.2]{PV94}, where ${\rm mod}(G:X)$ is defined by \eqref{mod} for any $G$, not necessarily connected:
 as is easily seen,  for a disconnected $G$, the set ${\mathscr F}(X)$ may be empty,
 so this definition should be corrected.},
 \begin{equation*}\label{ccomp}
 {\rm mod}(H:Y):={\rm mod}(H^0:Y).
 \end{equation*}
 Similarly, the modality of a representation of $H$ is de\-fined as the modality of its restriction to $H^0$.

 Recall that, for every integer $d$, the set $\{y\in Y\mid \dim H\cd y\leqslant d\}$ is closed in $Y$.\;Whence, for every locally closed irreducible (not necessarily $H$-stable) subset $Z$ in $Y$,
 the subset
 \begin{equation}\label{regreg}
 Z^{\rm reg}:=\{z\in Z\mid \dim H\cd z\geqslant \dim H\cd y\;\mbox{for every $y\in Z$}\}
 \end{equation}
is dense and open in $Z$.

 The definition of modality implies that equality \eqref{mod} still holds
 if ${\mathscr F}(X)$ is replaced by the set of all  maximal (with respect to inclusion) families in $X$, i.e., by the {\it sheets} of $X$ \cite[Sect.\;6.10]{PV94}.\;Recall
 that there are only finitely many sheets of $X$. If $X$ is irreducible,
 then
 $X^{\rm reg}$ is a sheet, called {\it regular}, which is open and dense in\;$X$.
 It follows from \eqref{mtd}
 that
 \begin{equation}\label{mtr}
 {\rm mod}(G:X^{\rm reg})={\rm tr}\,{\rm deg}_k k(X)^G.
 \end{equation}
This implies  that equality \eqref{mod} still holds if  ${\mathscr F}(X)$ is replaced by the set of all
$G$-stable locally closed (or closed) subsets of $X$, and ${\rm mod}(G:F)$ by
${\rm tr\,deg}_kk(F)^G$.

The aforesaid shows that ${\rm mod}(G:X)=0$ if and only if the set of all $G$-orbits in $X$ is finite.

If $G$ is reductive and $X$ is affine, then
\begin{equation}\label{redmo}
\begin{split}
{\rm mod}(G:X)&\geqslant {\rm mod}(G:X^{\rm reg})\overset{\eqref{mtr}}{=\hskip -2.0mm=}{\rm tr}\,{\rm deg}_k k(X)^G\\
&\geqslant {\rm tr}\,{\rm deg}_k k[X]^G=
\dim X/\!\!/G.
\end{split}
\end{equation}

\subsection{} The existence of regular sheets leads to defining the following naturally distinguished class of actions:

  \begin{definition}\label{reg} An action of a connected algebraic group $G$ on an irreducible algebraic variety $X$ is called {\it modality-regular} if ${\rm mod}(G:X)\!=\!{\rm mod}(G:X^{\rm reg})$.\;A linear representation
  $G\to {\rm GL}(V)$ is called  {\it modality-regular} if it determines
  a modality-regular action of $G$ on $V$.
 \end{definition}

There are affine algebraic groups $G$ such that every action of $G$ is modality-regular.
Here is their complete classification:

 \begin{theorem}[{{\rm groups all actions of which are modality-regular}}]\label{mcri} The follow\-ing properties of a connected affine algebraic group $G$ are equivalent:
 \begin{enumerate}[\hskip 4.2mm\rm(i)]
 \item all actions of $\,G$ on irreducible algebraic varieties are modality-regular;
 \item
 for every irreducible algebraic variety $X$ and every action of $\,G$ on $X$ with a dense open $G$-orbit, there are only finitely many $\,G$-orbits in $X$;
 \item
 $G$ is
 one of the following groups:
 \begin{enumerate}[\hskip 0mm---]
\item a torus,
 \item a product of a torus and a group isomorphic to
 ${\bf G}_a$.
 \end{enumerate}
 \end{enumerate}
 \end{theorem}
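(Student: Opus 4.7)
My plan is to prove the three conditions equivalent via the cycle $\mathrm{(i)}\!\Rightarrow\!\mathrm{(ii)}\!\Rightarrow\!\mathrm{(iii)}\!\Rightarrow\!\mathrm{(i)}$. The implication $\mathrm{(i)}\Rightarrow\mathrm{(ii)}$ is immediate: if $G$ acts on an irreducible variety $X$ with a dense orbit $O$, then $O$ has the maximal orbit dimension, so $X^{\rm reg}=O$ and ${\rm mod}(G:X^{\rm reg})=0$. Modality-regularity then forces ${\rm mod}(G:X)=0$, which, as already observed in the paragraph preceding \eqref{redmo}, means that $X$ has only finitely many $G$-orbits.

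For $\mathrm{(iii)}\Rightarrow\mathrm{(i)}$ I would treat $G=T$ and $G=T\times{\bf G}_a$ together, aiming to show that for every sheet $S$ of an irreducible $G$-variety $X$ one has ${\rm mod}(G:S)\leqslant {\rm mod}(G:X^{\rm reg})$. After passing to the normalization (which preserves modality) and applying Sumihiro's theorem to cover $X$ by $G$-stable affine opens, $X$ may be assumed affine. Commutativity of $G$ then yields a $G$-weight decomposition of $k[X]$ compatible with passage to $G$-stable closed irreducible subvarieties, and the rational quotient $X\dashrightarrow X/\!\!/G$ restricts, for a suitable model of $X/\!\!/G$, to a dominant rational map from $\overline{S}$ onto a closed subvariety of $X/\!\!/G$ of dimension equal to ${\rm mod}(G:S)$; since $\dim X/\!\!/G={\rm mod}(G:X^{\rm reg})$ by \eqref{mtr}, the inequality follows. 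The ${\bf G}_a$-factor is incorporated by combining the torus analysis with the tractable orbit structure of a single commuting additive one-parameter subgroup.

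The heart of the proof is $\mathrm{(ii)}\Rightarrow\mathrm{(iii)}$. Assuming (ii), I rule out each $G$ outside the list by exhibiting a $G$-action with dense orbit but infinitely many orbits. If the unipotent radical of $G$ has dimension $\geqslant 2$, choose a closed subgroup ${\bf G}_a^2\subseteq G$: the translation action of ${\bf G}_a^2$ on $\mathbb{P}^2$ has dense orbit $\mathbb{A}^2$ and pointwise-fixed line at infinity, so the induced variety $G\times^{{\bf G}_a^2}\mathbb{P}^2$ is a $G$-variety violating (ii). If $G$ is not solvable, take a non-trivial simple quotient $S$ of $G/R(G)$ and pull back via $G\twoheadrightarrow S$ a suitable $S$-variety with dense orbit and infinitely many orbits. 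The remaining solvable cases (non-trivial semidirect products of a torus with ${\bf G}_a$) are handled by analogous test-group constructions after embedding an appropriate ``universal'' non-direct solvable group.

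The main obstacle I anticipate is producing a violating action for each simple factor in the non-solvable case, especially when that factor has rank one (say $PSL_2$): many natural $PSL_2$-varieties with a dense orbit are spherical and therefore carry only finitely many orbits. The candidate construction I would try is a carefully chosen non-homogeneous equivariant compactification of $S/H$ (or an equivariant bundle over such), arranged so that a positive-dimensional $B$-fixed stratum in the boundary forces the orbit count to be infinite; verifying that some such compactification exists and has the required orbit structure is the step that will demand the most care.
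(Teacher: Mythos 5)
First, a point of comparison: the paper itself contains no argument for Theorem~\ref{mcri} --- its ``proof'' is the citation \cite{P172} --- so your attempt can only be measured against the statement, not against an in-paper proof. Your implication (i)$\Rightarrow$(ii) is complete and correct. The implication (iii)$\Rightarrow$(i), however, has a genuine gap at exactly the decisive point: after reducing to the affine case you assert that the rational quotient restricts on $\overline S$ to a dominant map onto a subvariety of the quotient of $X$ of dimension ${\rm mod}(G:S)$. That is just a restatement of \eqref{mtd} for $\overline S$; the actual content is why this dimension is bounded by ${\rm tr\,deg}_kk(X)^G$, i.e.\ why $\dim S-d_S^{\ }\leqslant\dim X-d_{\max}^{\ }$ for every sheet $S$, and nothing in your sketch addresses it. For a linear torus action this comes down to a submodularity statement for the rank function on the set of weights occurring in a point (passing to a coordinate subspace removes at least as many weights as it drops their rank), the general affine case has to be reduced to the linear one, and the ${\bf G}_a$-factor requires its own argument; the phrase ``tractable orbit structure of a single commuting additive one-parameter subgroup'' is not a proof. (Also, $X/\!\!/G$ has no meaning for non-reductive $G$; you mean the rational quotient $X\ds G$.)

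The implication (ii)$\Rightarrow$(iii) also has a gap, which you yourself flag: you have no construction for a rank-one simple factor, and the non-abelian solvable case is left to ``analogous test-group constructions.'' The fix is to induce from a subgroup rather than pull back from a quotient --- i.e.\ to apply to \emph{every} $G$ outside list (iii) the same device you already use for ${\bf G}_a^2$. Every such $G$ contains a closed connected two-dimensional subgroup $H$ isomorphic either to ${\bf G}_a^2$ or to a non-abelian ${\bf G}_m\ltimes{\bf G}_a$ (for non-solvable $G$ take $H$ inside a Borel subgroup; for non-abelian solvable $G$ with one-dimensional unipotent radical take a suitable ${\bf G}_m$ in a maximal torus together with that radical). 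In both cases $H$ acts linearly on a three-dimensional space, hence on ${\bf P}^2$, with a dense orbit and with the line at infinity fixed pointwise: for ${\bf G}_a^2$ this is your translation action, and for ${\bf G}_m\ltimes{\bf G}_a$ one takes $[z:x:y]\mapsto[z:a^nx:a^ny+bz]$. Since the action on ${\bf P}^2$ is projectively linearized, $X=G\times^H{\bf P}^2$ exists as an irreducible $G$-variety whose $G$-orbits are in bijection with the $H$-orbits of ${\bf P}^2$; hence $X$ has a dense orbit and infinitely many orbits, violating (ii). This disposes uniformly of ${\rm SL}_2$ and ${\rm PGL}_2$ and of all larger groups, so no analysis of spherical compactifications is needed.
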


 \begin{proof} See \cite{P172}.
\end{proof}

\subsection{} However, a restriction of the class of actions under consideration may lead to an extension of the class of those groups for which all actions of this class are modality-regular.\;Apparently
 for the first time such a phenomenon was discovered
in the following theorem:

 \begin{theorem}
 \label{BU}
 Let $G$, $B$, and $U$ be respectively a connected reductive algebraic group, a Borel subgroup of $\,G$, and a maximal unipotent subgroup of $G$.
 Then the restrictions to $B$ and $U$ of any action of $\,G$ on an irreducible algebraic variety
 are modality-regular.
 \end{theorem}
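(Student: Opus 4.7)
By \eqref{mtr} and the reformulation of ${\rm mod}(H:X)$ as the supremum of ${\rm tr\,deg}_k\,k(F)^H$ over $H$-stable locally closed irreducible $F \subseteq X$ (given in the paragraph after \eqref{mtr}), the required equality ${\rm mod}(H:X) = {\rm mod}(H:X^{\rm reg})$ for $H \in \{B,U\}$ is equivalent to the inequality
\begin{equation}\label{planeq}
 {\rm tr\,deg}_k\,k(F)^H \leq {\rm tr\,deg}_k\,k(X)^H
\end{equation}
for every such $F$, the reverse inequality being automatic since $X^{\rm reg}\in{\mathscr F}(X)$. I would aim to prove \eqref{planeq}.

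The plan is to upgrade the $H$-stable inclusion $F\hookrightarrow X$ to a $G$-stable one by passing to the $G$-varieties $G\times^H F$ (an $F$-bundle over $G/H$) and $X\times G/H$ (with the diagonal $G$-action). Their $G$-orbits are in bijection with the $H$-orbits of $F$, resp.\ $X$: the $G$-orbit of $[e,y]\in G\times^H F$ has stabilizer $H_y$, hence dimension $\dim G/H+\dim H\cdot y$, and similarly for $(x,eH)\in X\times G/H$. Rosenlicht's theorem therefore yields
\[
 {\rm tr\,deg}_k\,k(G\times^H F)^G={\rm tr\,deg}_k\,k(F)^H,\qquad {\rm tr\,deg}_k\,k(X\times G/H)^G={\rm tr\,deg}_k\,k(X)^H,
\]
while the $G$-equivariant morphism $[g,y]\mapsto(gy,gH)$ embeds $G\times^H F$ as a $G$-stable locally closed subvariety of $X\times G/H$. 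In this way \eqref{planeq} becomes a monotonicity statement for the $G$-invariant transcendence degree under $G$-stable closed subvarieties of a $G$-variety.

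The required monotonicity step, for $G$ reductive, appeals to the complexity machinery developed by Vinberg \cite{V86}, which establishes monotonicity of ${\rm tr\,deg}_k\,k(\cdot)^B$ under $G$-stable closed subvarieties. Combining this with the identity ${\rm tr\,deg}_k\,k(W)^B={\rm tr\,deg}_k\,k(W)^G+c(\text{generic }G\text{-orbit of }W)$ (valid for an irreducible $G$-variety $W$), and analysing the generic $G$-orbits of $G\times^H F$ and $X\times G/H$ --- which are of the form $G/S$ with $S\subseteq H$, hence well-behaved for $H$ the Borel or a maximal unipotent thanks to the Bruhat decomposition --- one transfers the Borel-complexity monotonicity to the desired $G$-invariant monotonicity. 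The case $H=U$ is handled by the same scheme, the quasi-affine quotient $G/U$ replacing the flag variety $G/B$.

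The main obstacle is this final monotonicity-transfer step: reductivity of $G$ and the specific structural role of $H$ as Borel or maximal unipotent are both essential, since Theorem \ref{mcri} shows that without the ambient reductive $G$-action, the groups $B$ and $U$ are not in general modality-regular on arbitrary varieties. The enlargement $F\rightsquigarrow G\times^H F$ is precisely the device that makes Vinberg's reductive-group monotonicity applicable to an $H$-stable (rather than $G$-stable) inclusion. A minor technical point is that $F$ may be only locally closed; this is handled by passing to the Zariski closure before applying monotonicity, which does not alter the $G$-invariant transcendence degree.
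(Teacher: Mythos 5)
Your reduction of modality-regularity to the inequality ${\rm tr\,deg}_k k(F)^H\leqslant {\rm tr\,deg}_k k(X)^H$ for every $H$-stable locally closed irreducible $F\subseteq X$ is correct, and so are the identifications $k(G\times^H F)^G\cong k(F)^H$ and $G\times^H X\cong X\times G/H$. But the proposal stops exactly where the content of the theorem lies. Monotonicity of ${\rm tr\,deg}_k k(\cdot)^G$ along $G$-stable subvarieties is \emph{false} for general reductive actions (Example \ref{exmo} is a counterexample), so after your reduction everything hinges on the special shape of the pair $G\times^H F\subseteq X\times G/H$. The route you indicate --- Vinberg's monotonicity of the complexity ${\rm tr\,deg}_k k(\cdot)^B$ together with the identity expressing the difference ${\rm tr\,deg}_k k(\cdot)^B-{\rm tr\,deg}_k k(\cdot)^G$ as the complexity of a generic orbit --- requires, in order to pass from the $B$-inequality to the $G$-inequality, that the complexity of a generic $G$-orbit of $G\times^H F$ be at least that of a generic $G$-orbit of $X\times G/H$. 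That inequality points the opposite way from what semicontinuity of orbit dimensions gives for free; you assert it only via a gesture at the Bruhat decomposition, and it is essentially as hard as the theorem itself. This is a genuine gap, not a proof.

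A second, structural point: the paper offers no proof of Theorem \ref{BU} --- the statement is exactly Theorems 2 and 3 of \cite{V86} and is simply cited. Moreover, the intermediate result you attribute to \cite{V86}, monotonicity of ${\rm tr\,deg}_k k(\cdot)^B$ under $G$-stable subvarieties, is in fact proved there for arbitrary \emph{$B$-stable} irreducible subvarieties. In that stronger (and correct) form it yields the case $H=B$ immediately from \eqref{mtr} and the reformulation of modality via ${\rm tr\,deg}_k k(F)^B$, with no need for the bundle $G\times^B F$ or any transfer step; the case $H=U$ is then deduced in \cite{V86} by a short separate comparison of $U$-orbits with $B$-orbits. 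Your detour through $X\times G/H$ thus replaces a statement that could be cited directly by a harder one that cannot, and the bridge between the two is precisely the missing step.
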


 \begin{proof} See \cite[Thms. 2 and 3]{V86}.
 \end{proof}

\begin{remark} {\rm By Theorem \ref{mcri}, if $G$ is not abelian, then there are  actions of $B$ which are not modality-regular.\;By Theorem \ref{BU}, these $B$-actions can not be extended up to $G$-actions.
If ${\rm rk}\,G\geqslant 2$, the same holds for $U$.}
\end{remark}

\subsection{} The next example of this phenomenon, in particular, shows that, apart from tori, there are other connected reductive algebraic groups for which every representation is modality-regular:

 \begin{theorem}[{{actions of  ${\rm SL}_2$}}]\label{SL2} Every action of $\,G={\rm SL}_2$ on an irreducible quasiaffine algebraic va\-rie\-ty $X$ is modality-regular.
 \end{theorem}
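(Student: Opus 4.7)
The plan is to reformulate modality-regularity as a codimension inequality and then verify it via local analysis. By the observation following \eqref{mtr}, $\mathrm{mod}(G:X)$ equals the supremum of $\mathrm{tr\,deg}_k k(F)^G$ over all $G$-stable closed irreducible subsets $F \subseteq X$, while $\mathrm{mod}(G:X^{\mathrm{reg}}) = \mathrm{tr\,deg}_k k(X)^G$. Hence modality-regularity amounts to
\[
\mathrm{tr\,deg}_k k(F)^G \leq \mathrm{tr\,deg}_k k(X)^G \quad \text{for every such } F.
\]
Writing $r := \dim G\cdot x$ for $x \in X^{\mathrm{reg}}$ and $s := \dim G\cdot y$ for $y \in F^{\mathrm{reg}}$, the two sides equal $\dim F - s$ and $\dim X - r$, so this is equivalent to $\cod_X F \geq r - s$.

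Next I would stratify by $X^{\leq d} := \{x \in X : \dim G\cdot x \leq d\}$, which is closed and $G$-stable; since $F \subseteq X^{\leq s}$, it suffices to prove $\cod_X X^{\leq d} \geq r - d$ for every $d < r$. As $\dim G = 3$, only $d \in \{0, 1, 2\}$ matter, and the stabilizers of points in the locally closed stratum $X^{(d)} := X^{\leq d} \setminus X^{\leq d-1}$ are, up to conjugacy, $G$ itself ($d=0$), the Borel $B$ ($d=1$), and one of $T, N(T), U$ ($d=2$). The three required bounds are $\cod_X X^G \geq r$, $\cod_X X^{\leq 1} \geq r - 1$, and $\cod_X X^{\leq 2} \geq r - 2$.

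These I would establish by local analysis. After replacing $X$ by its smooth locus (open, dense, $G$-stable, with the same field of $G$-invariants), consider a smooth fixed point $x_0 \in X^G$: the Cartan fixed-point theorem gives $T_{x_0}(X^G) = (T_{x_0}X)^G$, and $G$-equivariant formal linearization at $x_0$ (available since $G$ is reductive and $\mathrm{char}\,k = 0$) identifies the formal neighborhood of $x_0$ in $X$ with that of $0$ in the $\mathrm{SL}_2$-module $T_{x_0}X$. Hence the generic $G$-orbit dimension on $X$ coincides with that on $T_{x_0}X$. Writing $T_{x_0}X = V_0^{\oplus a}\oplus W$ with $W$ a sum of nontrivial $V_n$'s, one has $\cod_X X^G = \dim W$, and direct inspection of $\mathrm{SL}_2$-representations shows that any nontrivial $\mathrm{SL}_2$-module with generic orbit dimension $r \in \{2,3\}$ has dimension $\geq r$ (and that no $\mathrm{SL}_2$-module has generic orbit dimension $1$, so $r=1$ forces $X^G = \emptyset$). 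A parallel analysis at $x_1 \in X^{(1)}$ with $G_{x_1} = B$, modelling $X$ locally on $G\times^B N_{x_1}$ for the $B$-module $N_{x_1}$, gives $\cod_X X^{\leq 1} = \dim N_{x_1} - \dim N_{x_1}^B$, and combining Theorem \ref{BU} for the $B$-action on $N_{x_1}$ with the structure of $B$-modules yields $\cod_X X^{\leq 1}\geq r-1$. The last bound $\cod_X X^{\leq 2}\geq r-2$ is automatic.

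I expect the main obstacle to be making the local analysis rigorous at points with non-reductive stabilizer $B$, since Luna's classical slice theorem presumes reductive isotropy, whereas $G/B$ is projective. I would address this by either (i) working with the equivariant formal neighborhood and the quotient $G\times^B N_{x_1}$ (which suffices for dimension computations in characteristic zero), or (ii) exploiting the quasi-affine hypothesis to embed $X$ $G$-equivariantly into an $\mathrm{SL}_2$-module, reducing to the standard affine setting where Luna's theorem applies directly. A secondary technical point is verifying that generic orbit dimensions are preserved under passage to the formal (or \'etale) neighborhood at each $x_0$ and $x_1$; this rests on upper semicontinuity of orbit dimension together with the irreducibility of $X$.
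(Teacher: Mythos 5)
Your reduction to the codimension inequalities $\cod_X X^{\leq d}\geq r-d$ for $d<r$ is sound, and the case $d=2$ is indeed automatic; but the real content of the theorem is concentrated exactly where your argument is weakest, namely $d=1$, and there you miss the one observation for which the quasiaffine hypothesis is present. A $1$-dimensional ${\rm SL}_2$-orbit has a Borel subgroup as stabilizer, hence is a complete curve (a quotient of $G/B\cong {\bf P}^1$), and a positive-dimensional complete subvariety cannot lie in a quasiaffine variety; therefore $X^{(1)}=\varnothing$, $X^{\leq 1}=X^G$, and there is nothing to analyze. The substitute you propose --- modelling $X$ near a point $x_1$ with $G_{x_1}=B$ on $G\times^B N_{x_1}$ and invoking Theorem \ref{BU} --- would not go through even if such points existed: no \'etale or formal slice theorem is available at a point whose orbit is not closed and whose stabilizer is not reductive (Luna's theorem requires a closed orbit in an affine variety, and then the isotropy is automatically reductive), and you give no argument for why $\cod_X X^{\leq 1}$ would equal $\dim N_{x_1}-\dim N_{x_1}^B$, nor for how Theorem \ref{BU}, a statement about modality of $B$-actions, yields that codimension bound. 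So the $d=1$ step is a genuine gap, and the intended repair is precisely the projectivity of $G/B$.

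The $d=0$ step also has a hole. Replacing $X$ by its smooth locus is not a legitimate reduction: it leaves ${\rm mod}(G:X^{\rm reg})$ unchanged but may discard the very families you must control, since a component of $X^G$ (or of any $X^{\leq d}$) can lie entirely inside ${\rm Sing}\,X$, where the tangent-space computation $\cod_X X^G=\dim W$ at a smooth fixed point says nothing; the applicability of equivariant linearization on a merely quasiaffine $X$ is a further unaddressed point. The paper's route avoids all local analysis: embed $X$ as an open subset of an affine $G$-variety, observe that the quotient morphism is injective on $X^G$ because invariants separate closed orbits, and conclude $\dim X^G\leq \dim X/\!\!/G\leq {\rm tr\,deg}_k k(X)^G={\rm mod}(G:X^{\rm reg})$ by \eqref{mtr}. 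Combined with the absence of $1$-dimensional orbits and the trivial $d=2$ case, this finishes the proof in a few lines; you should replace both of your local arguments by these two global observations.
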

 \begin{proof} Since $X$ is quasiaffine, there is an equivariant open embedding of $X$ in an affine algebraic variety
 endowed with an action of $G$; see \cite[Thm.\;1.6]{PV94}. Therefore,  we may (and shall) assume that $X$ is affine.
 Given a  sheet $S\neq X^{\rm reg}$,  we need to show that
 \begin{equation}\label{ineq}
 {\rm mod}(G:S)\leqslant {\rm mod}(G:X^{\rm reg}).
 \end{equation}

First we note that
\begin{equation}\label{fd}
\dim X^G\leqslant {\rm mod}(G:X^{\rm reg}).
\end{equation}
Indeed, since $k[X]^G$ separates closed orbits, the restriction  of the quotient morphism
$X\to X/\!\!/G$  to $X^G$ is injective. Hence $\dim X^G\leqslant \dim X/\!\!/G$. This and
\eqref{mtr} imply \eqref{fd} because
$\dim X/\!\!/G={\rm tr\,deg}_kk[X]\leqslant {\rm tr}\,{\rm deg}_k k(X)^G$.

In view of \eqref{fd} we need to consider only the case where $S \cap X^G=\varnothing$. Assume that this equality holds.\;As is well-known, every one-dimensional homogeneous space of $G$ is projective (actually, isomorphic to ${\bf P}^1$).\;Whence, since every  $G$-orbit in $X$ is quasiaffine, its dimension may be only $\dim G=3$, $2$, or $0$.\;This
and $S\neq X^{\rm reg}$ imply that
 \begin{equation}\label{sd}
\dim \,G\cd x=\begin{cases} 2 &\mbox{if $x\in S$},\\
 3 &\mbox{if $x\in X^{\rm reg}$,}
 \end{cases}
 \quad\mbox{and}\quad
 \dim S\leqslant \dim X-1.
 \end{equation}
From \eqref{mo},\;\eqref{sd} we get ${\rm mod}(G:S)\!\leqslant\! \dim X\!-\!3\!=\!{\rm mod}(G:X^{\rm reg})$, whence \eqref{ineq}.
 \end{proof}

\begin{remark} {\rm By Theorem \ref{mcri}, the quasiaffinity condition in Theorem \ref{SL2} can not be dropped.}
\end{remark}

\subsection{} Using Theorem \ref{SL2}, one computes the modality of  every representation of ${\rm SL}_2$. Namely,
let  $\varrho_n$ be the
$(n+1)$-dimensional
linear representation of ${\rm SL}_2$
determining the natural ${\rm SL}_2$-module structure on the space
of binary forms of degree $n$ over $k$.\;It
is irreducible and
every linear representation of ${\rm SL}_2$ is equivalent to a direct sum of
such representations. Given a representation $\varrho$ and an integer $s>0$, we denote $s\varrho:=\varrho\oplus\cdots\oplus\varrho$ ($s$ summands).
 \begin{corollary}[{{\rm modality of ${\rm SL}_2$-representations}}] Let
 $\varrho\colon G\to {\rm GL}(V)$ be a linear representation
 of $\,G={\rm SL}_2$.\;Then
 \begin{equation*}
 {\rm mod} \,\varrho =\begin{cases}
 \dim \varrho-3&\mbox{if $\varrho\neq s\varrho_0\oplus \varrho_1,\, s\varrho_0\oplus \varrho_2,\,s\varrho_0$},\\
 \dim \varrho-2& \mbox{if $\varrho = s\varrho_0\oplus \varrho_1,
 s\varrho_0\oplus \varrho_2$},\\
  \dim\varrho & \mbox{if $\varrho = s\varrho_0$}.
 \end{cases}
 \end{equation*}
 \end{corollary}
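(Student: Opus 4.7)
The plan is to reduce the computation of $\mathrm{mod}\,\varrho$ to the computation of the dimension of a generic $G$-orbit in $V$, and then to enumerate the possibilities for generic stabilizers.

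Since $V$ is affine, Theorem \ref{SL2} applies, so $\varrho$ is modality-regular and, by \eqref{mo} and Definition \ref{reg},
\[
\mathrm{mod}\,\varrho \;=\; \mathrm{mod}(G\!:\!V^{\mathrm{reg}}) \;=\; \dim V - d,
\]
where $d$ is the dimension of any $G$-orbit $G\cdot v$ with $v\in V^{\mathrm{reg}}$. As in the proof of Theorem \ref{SL2}, every $G$-orbit in $V$ is quasiaffine, hence $d\in\{0,2,3\}$. The value of $d$ is determined by the generic stabilizer $G_v$: it is $\mathrm{SL}_2$ (so $d=0$), a one-dimensional subgroup ($d=2$), or finite ($d=3$). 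Thus the three cases of the formula correspond exactly to $d=0$, $d=2$, $d=3$.

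The first step is to settle the boundary cases. One has $d=0$ iff $V=V^G$; since $V_n^G=0$ for $n\geqslant 1$ (as $\varrho_n$ is a nontrivial irreducible representation of the perfect group $\mathrm{SL}_2$), this is equivalent to $\varrho=s\varrho_0$, giving the third line of the formula. Next, I isolate the exceptional summands $\varrho_1,\varrho_2$: for every $n\geqslant 3$ a generic binary form of degree $n$ has finite stabilizer in $\mathrm{SL}_2$ (take e.g.\ $x^n+y^n$), whereas a nonzero $v\in V_1\cong k^2$ has stabilizer conjugate to a maximal unipotent subgroup $U$ (hence $1$-dimensional) and a regular semisimple element of $V_2\cong\mathfrak{sl}_2$ has stabilizer a maximal torus $T$ (hence $1$-dimensional).

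The main step is the analysis of direct sums. Write $\varrho = s\varrho_0 \oplus a\varrho_1 \oplus b\varrho_2 \oplus \bigoplus_{n\geqslant 3} c_n \varrho_n$. Since $G_{(v_1,\dots,v_k)} = \bigcap_i G_{v_i}$ for generic choices, whenever some $c_n$ is positive with $n\geqslant 3$ the generic stabilizer is already finite, so $d=3$ and $\mathrm{mod}\,\varrho=\dim\varrho-3$. Otherwise $\varrho = s\varrho_0\oplus a\varrho_1\oplus b\varrho_2$ and I examine four subcases:
\begin{enumerate}[\rm(a)]
\item $a\geqslant 2$: two generic vectors in $k^2$ form a basis, so their common stabilizer in $\mathrm{SL}_2$ is trivial, hence $d=3$.
\item $a=1,\ b\geqslant 1$: the stabilizer of a generic $v\in k^2$ is a one-dimensional unipotent $U$; since the fixed set of $U$ in $\mathfrak{sl}_2$ is the one-dimensional line spanned by a root vector, a generic tuple in $b$ copies of $\mathfrak{sl}_2$ is not $U$-fixed, so the generic stabilizer is trivial, and again $d=3$.
\item $a=0,\ b\geqslant 2$: the centralizers of two generic regular semisimple elements of $\mathfrak{sl}_2$ are distinct maximal tori, intersecting in the (finite) center, so $d=3$.
\item $a=1,\ b=0$ or $a=0,\ b=1$: by the previous paragraph the generic stabilizer is $U$ or $T$, respectively, so $d=2$.
\end{enumerate}
Subcases (a)--(c) give $\mathrm{mod}\,\varrho=\dim\varrho-3$ and subcase (d) gives the second line of the formula, completing the classification.

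The only genuinely nontrivial input is subcase (b), where one must verify that $U$ has no generic fixed vector in several copies of $\mathfrak{sl}_2$; all other subcases are immediate from the standard orbit geometry of $\mathrm{SL}_2$ on $k^2$ and on $\mathfrak{sl}_2$.
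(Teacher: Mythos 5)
Your argument is correct, and its first half is exactly the paper's proof: Theorem \ref{SL2} gives modality-regularity, so ${\rm mod}\,\varrho=\dim\varrho-\max_{v\in V}\dim G\cd v$. The difference is in how that maximum is obtained: the paper simply cites \cite{P74}, where the maximal orbit dimension for every ${\rm SL}_2$-module was computed, whereas you rederive it from scratch by decomposing $\varrho$ into irreducibles and intersecting generic stabilizers. Your case analysis is complete and each step checks out: generic forms of degree $n\geqslant 3$ have $n$ distinct roots and hence finite stabilizer; two generic vectors in $k^2$ span, so their common stabilizer is trivial; two generic regular semisimple elements of $\mathfrak{sl}_2$ have distinct maximal tori as centralizers, which meet in the center; and in subcase (b) the stabilizer is a proper closed subgroup of $U\cong{\bf G}_a$, hence trivial in characteristic zero (finiteness already suffices for $d=3$). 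What your route buys is self-containedness at the cost of length; what the paper's buys is brevity by outsourcing the one nontrivial computation. One cosmetic remark: $G_{(v_1,\dots,v_k)}=\bigcap_i G_{v_i}$ holds for all tuples, not only generic ones, so the qualifier there is unnecessary.
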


\begin{proof} By Definition \ref{reg} and Theorem \ref{SL2} we have ${\rm mod}\,\varrho=\dim\varrho-\underset{v\in V}{\max}\,\dim G\cdot v$.\;The integer $\underset{v\in V}{\max}\,\dim G\cdot v$ is computed \cite{P74}. Whence the claim.
\end{proof}

\subsection{}  The following example shows that for every integer $n\geqslant 3$ there are linear representations of $G={\rm SL}_n$ which are not modality-regular.\;In particular,
${\rm SL}_2$ in Theorem \ref{SL2} cannot be replaced  by ${\rm SL}_n$ for  $n\geqslant 3$.

\begin{example}\label{exmo}
Consider the natural action of $G$ on $k^n$ and the diagonal action of $\,G$ on $V\!:=\!k^n\oplus\cdots\oplus k^n$ ($d$ summands).
If $d\leqslant n-1$, there is an open $G$-orbit in $V$, so we have ${\rm mod}(G:V^{\rm reg})=0$.\;On the other hand, for every nonzero
$v, u\in k^n$ and $\lambda_1,\ldots, \lambda_{d-1}, \mu_1,\ldots, \mu_{d-1}\in k^\times$ the elements
$(v,\lambda_1 v,\ldots, \lambda_{d-1} v)\in V$ and $(u,\mu_1 u,\ldots, \mu_{d-1} u)\in V$ lie in the same $G$-orbit if and only if $\lambda_i=\mu_i$ for all $i$.\;This imples that
${\rm mod}(G:V)\geqslant d-1$.
\end{example}

\subsection{} We shall now prove that all the representations from a certain important class are modality-regular; this will be then used in the proof of the classificati\-on results in Subsection \ref{clcl}.

Recall from \cite{Ka75}, \cite{Ka80} (see also \cite[\S8]{PV94}) that a linear action of a reductive algebraic group $G$ on a vector space $V$ (and the corresponding representation $G\to {\rm GL}(V)$) is called {\it visible} if there are only finitely many $G$-orbits in the level variety of $k[V]^G$ in $V$ containing $0$.\;As a matter of fact,
then automatically  every level variety of $k[V]^G$ in $V$ contains only finitely many $G$-orbits \cite[Cor.\;3 of Prop.\;5.1]{PV94}.\;Extending this terminology to a more general setting, we introduce the following

 \begin{definition}
 An action of $G$ on an affine algebraic variety $X$ is called {\it visible} if every fiber of the categorical quotient
\begin{equation}\label{ct}
\pi_{G, X}^{\ }\colon X\to X/\!\!/G
\end{equation}
contains only finitely many $G$-orbits.
\end{definition}

\begin{theorem}[{{\rm modality of visible actions}}] \label{visact} Every visible action
of a reduc\-ti\-ve algebraic group $G$ on an irreducible affine algebraic variety $X$
enjoys the following properties:
\begin{enumerate}[\hskip 4.2mm \rm(i)]
    \item it is modality-regular;
    \item ${\rm mod}(G:X)=\dim X/\!\!/G$;
    \item the induced action of $\,G$ on every
    closed $G$-stable subset of $X$ is visible.
\end{enumerate}
\end{theorem}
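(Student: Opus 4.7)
The strategy is to prove (ii) first, derive (i) at once from (ii) combined with the chain \eqref{redmo}, and treat (iii) separately via the behaviour of categorical quotients under restriction to closed $G$-stable subsets.

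For (ii), the chain \eqref{redmo} already yields ${\rm mod}(G:X)\geqslant {\rm mod}(G:X^{\rm reg})\geqslant \dim X/\!\!/G$, so the task reduces to establishing the reverse inequality ${\rm mod}(G:X)\leqslant \dim X/\!\!/G$. Since the modality is computed on sheets, it is enough to fix an arbitrary sheet $S\subseteq X$ with common orbit dimension $d$ and show $\dim S-d\leqslant \dim X/\!\!/G$. Let $\overline{S}$ be the closure of $S$ in $X$; it is irreducible, closed and $G$-stable. Set $Y:=\pi_{G,X}(\overline{S})$. Because $G$ is reductive and $\overline{S}$ is closed and $G$-stable in the affine variety $X$, the image $Y$ is closed and irreducible in $X/\!\!/G$, and the restriction $\pi_{G,X}|_{\overline{S}}\colon \overline{S}\to Y$ is dominant.

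The crux is a dimension estimate on the generic fiber of this restriction, resting on two observations that must be combined. First, $\{x\in\overline{S}\mid \dim G\cd x\leqslant d\}$ is closed in $\overline{S}$ (compare \eqref{regreg}) and contains the dense subset $S$, so every $G$-orbit in $\overline{S}$ has dimension at most $d$. Second, visibility of the action of $G$ on $X$ guarantees that each fiber $\pi_{G,X}^{-1}(y)$ consists of finitely many $G$-orbits; consequently the $G$-stable intersection $\pi_{G,X}^{-1}(y)\cap\overline{S}$, which is a fiber of the restriction, is a finite union of $G$-orbits of dimension $\leqslant d$ and hence itself has dimension $\leqslant d$. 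The fiber dimension theorem then gives $\dim\overline{S}\leqslant \dim Y+d$, so
\[
{\rm mod}(G:S)=\dim S-d\leqslant \dim Y\leqslant \dim X/\!\!/G.
\]
Maximising over all sheets proves (ii); substituted back into \eqref{redmo}, it forces all inequalities there to be equalities, which yields (i).

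For (iii), let $Z\subseteq X$ be closed and $G$-stable. By the standard properties of categorical quotients for reductive group actions on affine varieties, $\pi_{G,X}(Z)$ is closed in $X/\!\!/G$ and the restriction $\pi_{G,X}|_Z\colon Z\to \pi_{G,X}(Z)$ is the categorical quotient $\pi_{G,Z}$. Every fiber of $\pi_{G,Z}$ equals $\pi_{G,X}^{-1}(y)\cap Z$, a $G$-stable subset of the finite-orbit set $\pi_{G,X}^{-1}(y)$, hence itself contains only finitely many $G$-orbits. The only step demanding real care is the generic-fiber estimate in (ii): separately, the sheet-closure observation and the finiteness of orbits in fibers of $\pi_{G,X}$ are elementary, but it is essential to couple them to bound the dimension of the generic fiber of $\pi_{G,X}|_{\overline{S}}$ by $d$, which is the sole nontrivial input of the argument.
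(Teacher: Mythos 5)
Your proposal is correct and follows essentially the same route as the paper: both rest on the observation that a fiber of $\pi_{G,X}^{\ }$ intersected with the closure $\overline{S}$ of a sheet is a finite union of $G$-orbits of dimension at most $d$, hence has dimension at most $d$, so the fiber dimension theorem bounds ${\rm mod}(G:S)$ by $\dim \pi_{G,X}^{\ }(\overline S)\leqslant \dim X/\!\!/G$, with the reverse inequality supplied by \eqref{redmo}. The only (immaterial) difference is organizational: the paper first proves (iii) and then applies the regular-sheet formula \eqref{mgx} to $\overline S$, whereas you perform the same fiber estimate inline without explicitly invoking (iii).
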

\begin{proof} First, we prove (iii).\;Let
$Y$ be a $G$-stable closed subset of $X$.\;Then the set $Z:=\pi_{G, X}^{\ }(Y)$ is closed in $X/\!\!/G$ and $\pi_{G, X}^{\ }|_{Y}\colon Y\to Z$ is the categorical quotient for the action of $G$ on $Y$; see \cite[Sect.\;4.4]{PV94}.\;Every fiber of $\pi_{G, X}^{\ }|_{Y}$ is the intersection of  $Y$ with a fiber of $\pi_{G, X}^{\ }$; since the latter
contains only finitely-many $G$-orbits, this intersection shares this property.

Now we prove (i) and (ii). Let $m_{G, X}^{\ }:=\max_{x\in X}\dim G\cdot x$.\;For every $x\in X^{\rm reg}$ we then have $\dim G\cd x=m_{G, X}^{\ }$.\;Since the fiber $\pi_{G, X}^{-1}(\pi_{G, X}^{\ }(x))$ contains only finitely many orbits, the latter equality entails that the dimension of this fiber is equal to $m_{G, X}^{\ }$.\;From this we infer that
\begin{equation}\label{mgx}
{\rm mod}(G:X^{\rm reg})=\dim X^{\rm reg}-m_{G, X}^{\ }=\dim X-m_{G, X}^{\ }=\dim X/\!\!/G
\end{equation}
(the second equality in \eqref{mgx} holds as $X^{\rm reg}$ is open in $X$,
and the third by the fiber dimension theorem).

Let $S$ be a sheet of $X$, let $\overline S$ be its closure in $X$, and let $Z:=\pi_{G, X}({\overline S})={\overline S}/\!\!/G$.\;As
$\overline S$ is a $G$-stable closed subset in $X$,
the action of $G$ on $\overline S$ is visible by (iii).\;Therefore,
replacing $X$ in \eqref{mgx} by ${\overline S}$ and taking into account that
$S={\overline S}^{\rm reg}$,  we obtain
\begin{equation}\label{zx}
{\rm mod}(G:S)=\dim Z.
\end{equation}

Now the inclusion $Z\subseteq X/\!\!/G$ combined with \eqref{mgx} and \eqref{zx}, yields the inequality ${\rm mod}(G:X^{\rm reg})\geqslant {\rm mod}(G:S)$.\;This completes the proof.
\end{proof}

\subsection{}\label{thetagroups}
Theorem \ref{visact} is applicable to the class
of so-called
$\theta$-groups studied in \cite{V76} (see also \cite{Ka80}).

Namely, let $m$ be either a positive integer or $\infty$.\;Denote by
${\mathbf Z}_m$ the following additively written cyclic group of order $m$:
for $m<\infty$,
the elements of
$\mathbf Z_m$
are the integers between $0$ and $m-1$,
and
the sum of $i$ and $j$ in $\mathbf Z_m$
is the remainder of dividing $i+j$ by $m$;
by definition,  $\mathbf Z_\infty=\mathbf Z$.

Consider a simply connected semisimple algebraic group $G$ and assume that its Lie algebra $\g={\rm Lie}\,G$  is $\mathbf Z_m$-graded:
\begin{equation}\label{gradC}
\g=\bigoplus_{i\in \mathbf Z_m}\g_i.
\end{equation}
Then ${\rm Aut}\,\g$ contains a subgroup $\theta$, which, for $m<\infty$,  is cyclic of order $m$,
and, for $m=\infty$, a one-dimensional torus,
such that  \eqref{gradC} is the weight
decomposition of $\g$ with respect to the natural action of $\theta$.

The component $\g_0$ in \eqref{gradC} is a reductive subalgebra of $\g$.\;Let $G_0$ be a closed connected subgroup of $G$ with
\begin{equation}\label{G0}
{\rm Lie}\,G_0=\g_0.
\end{equation}

Every $\g_i$ in \eqref{gradC} is
$G_0$-stable with respect to the adjoint action, so one can consider $\g_i$  as the $G_0$-module with respect to this action.\;As any $G_0$-module $\g_i$ coincides with the $G_0$-module $\g_1$ for another appropriate cyclic grading of $\g$, when studying the orbital decompositions
it suffices to explore only the $G_0$-module $\g_1$.\;By \cite[Thm.\;1]{V76},
any two maximal linear subspaces of  $\g_1$
consisting of semisimple paiwise commuting elements are transformed to each other by $G_0$.\;These subspaces are called the {\it Cartan subspaces} and their mutual dimension is called the {\it rank of the graded Lie algebra} \eqref{gradC}.\;The image of
the adjoint representation $G_0\to {\rm GL}(\g_1)$ is called the {\it $\theta$-group} associated with \eqref{gradC}.

\begin{theorem}[{{\rm modality of $\theta$-group actions}}]\label{mr} In the above notation,
the adjoint representation $\varrho\colon G_0\to {\rm GL}(\g_1)$ is modality-re\-gu\-lar and
 its modality is equal to
the rank $r$
of the graded Lie algebra {\rm \eqref{gradC}}.
\end{theorem}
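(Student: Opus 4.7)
The strategy is to deduce everything from Theorem \ref{visact} by showing that the adjoint action of $G_0$ on $\g_1$ is visible, and then to identify $\dim\g_1/\!\!/G_0$ with the rank $r$. Both ingredients are supplied by Vinberg's theory of $\theta$-groups \cite{V76}.

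First, I would verify visibility, i.e., that every fiber of $\pi_{G_0,\g_1}^{\ }\colon\g_1\to\g_1/\!\!/G_0$ contains only finitely many $G_0$-orbits. The key tool is Vinberg's Jordan decomposition in graded semisimple Lie algebras: every $x\in\g_1$ has a unique decomposition $x=x_s+x_n$ with $x_s,x_n\in\g_1$ commuting, $x_s$ semisimple and $x_n$ nilpotent. Moreover, the fiber of $\pi_{G_0,\g_1}^{\ }$ through $x$ meets the $G_0$-orbit of $x_s$, and its intersection with the affine subspace $x_s+\g_1^{x_s}$ consists of the nilpotent elements of the $\mathbf{Z}_m$-graded reductive Lie algebra $\g^{x_s}$ (whose degree-one component is $\g_1^{x_s}$). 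Since, by \cite{V76}, the set of nilpotent $G_0'$-orbits in the degree-one component of any $\mathbf{Z}_m$-graded reductive Lie algebra is finite (where $G_0'$ is the corresponding connected subgroup), it follows that each fiber of $\pi_{G_0,\g_1}^{\ }$ is a finite union of $G_0$-orbits. Hence the action is visible.

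With visibility established, Theorem \ref{visact} yields that $\varrho$ is modality-regular and that $ {\rm mod}\,\varrho = \dim\g_1/\!\!/G_0$. To evaluate the right-hand side I would invoke Vinberg's analog of the Chevalley restriction theorem \cite{V76}: if $\c\subseteq\g_1$ is a Cartan subspace and $W=N_{G_0}(\c)/Z_{G_0}(\c)$ is the associated ``little Weyl group'' (a finite group acting linearly on $\c$), then restriction of functions induces an isomorphism of invariant algebras $k[\g_1]^{G_0}\cong k[\c]^W$. Consequently $\dim\g_1/\!\!/G_0={\rm tr\,deg}_kk[\c]^W=\dim\c=r$, which finishes the argument.

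The only genuinely nontrivial step is visibility; once that is in hand, Theorem \ref{visact} and Vinberg's Chevalley restriction theorem deliver both assertions at once. The rest of the proof is bookkeeping, and no explicit case analysis on $m$ is needed since Vinberg's theorems cover both the finite and the infinite case uniformly.
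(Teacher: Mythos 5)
Your proposal is correct and takes essentially the same route as the paper: the paper's proof simply cites \cite[Thm.\;4]{V76} for visibility and \cite[Thm.\;5]{V76} for $\dim \g_1/\!\!/G_0=r$, and then invokes Theorem \ref{visact}. You merely unpack the proofs of those two cited facts (visibility via the graded Jordan decomposition and finiteness of nilpotent orbits, and the quotient dimension via Vinberg's Chevalley restriction theorem), which is accurate but not a different argument.
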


\begin{proof} By \cite[Thm.\;4]{V76}, $\varrho$ is visible, and, by \cite[Thm.\;5]{V76}, $\dim \g_1/\!\!/G_0=r$.\;In view of this, the claim follows from Theorem
\ref{visact}.
\end{proof}

\begin{corollary}[{{\rm case $m=1$}}]\label{c1} The adjoint representation of every connected semisimple algebraic group $G$
is modality-regular and its modality is equal to  the rank of $\,G$.
\end{corollary}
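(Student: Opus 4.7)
The plan is to deduce this from Theorem~\ref{mr} by specializing to $m=1$, and then to remove the simply-connectedness hypothesis by a standard cover argument.

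First I would unwind the definitions of Subsection~\ref{thetagroups} in the case $m=1$.\;Here $\mathbf Z_1$ is trivial, so the grading \eqref{gradC} collapses to $\g=\g_0$.\;Correspondingly $G_0=G$, and the $G_0$-module $\g_1$ that appears in the statement of Theorem~\ref{mr} is to be read as $\g_0=\g$ itself; the representation $\varrho\colon G_0\to{\rm GL}(\g_1)$ then becomes the ordinary adjoint representation $G\to{\rm GL}(\g)$.\;One also has to check that the generalized notion of a Cartan subspace specializes correctly: in $\g$ a maximal linear subspace of semisimple pairwise commuting elements is precisely a Cartan subalgebra, so the rank $r$ of the graded Lie algebra equals the usual rank of $\g$.\;With these identifications, Theorem~\ref{mr} asserts exactly the desired conclusion for simply connected $G$.

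It then remains to drop the simply-connectedness hypothesis.\;Given an arbitrary connected semisimple $G$, let $\widetilde G$ be its simply connected cover; both groups share the same Lie algebra $\g$, and their adjoint actions on $\g$ factor through the common adjoint group $\widetilde G/Z(\widetilde G)=G/Z(G)$.\;Hence the two actions have identical orbits and identical orbit dimensions, so they share the same sheets, the same regular sheet, and the same modality; the modality-regular property and the value $\rk G$ of the modality therefore transfer from $\widetilde G$ to $G$.

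The only real content of the argument is the identification of the $m=1$ case with the classical adjoint setting, and no substantial obstacle is anticipated: once the definitions of Subsection~\ref{thetagroups} are unwound for the trivial grading, the corollary falls out of Theorem~\ref{mr} together with the routine cover argument.\;(As an alternative route, one could bypass Theorem~\ref{mr} and apply Theorem~\ref{visact} directly, invoking Kostant's visibility theorem for the adjoint representation and Chevalley's computation $\dim\g/\!\!/G=\rk G$; the conclusion is the same.)
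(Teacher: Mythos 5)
Your proposal is correct and follows exactly the route the paper intends: the corollary is simply Theorem~\ref{mr} specialized to $m=1$, where $\g_1=\g_0=\g$, $G_0=G$, a Cartan subspace is a Cartan subalgebra, and the graded rank is the usual rank. Your extra step of passing from the simply connected cover $\widetilde G$ to an arbitrary connected semisimple $G$ (the adjoint actions factor through the common adjoint group, so orbits, sheets and modality coincide) is a detail the paper leaves implicit, but it is routine and correctly handled.
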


\begin{corollary}[{{\rm case $m=2$}}]\label{c2} The isotropy representation of every symmetric space $X$ of a connected semisimple algebraic group is modality-regular and its modlity is equal to the rank of $X$.
\end{corollary}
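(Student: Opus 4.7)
The plan is to observe that Corollary \ref{c2} is essentially a translation of Theorem \ref{mr} into the classical language of symmetric spaces, the specialization being the case $m=2$.

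First, I would unpack the data. A symmetric space of a connected semisimple algebraic group $G$ is a homogeneous space $X=G/G_0$ where $G_0$ is the identity component of the fixed-point subgroup of an involution $\theta\in{\rm Aut}\,G$. The differential of $\theta$ is an order-two automorphism of $\g={\rm Lie}\,G$, and its eigenspace decomposition
\[
\g=\g_0\oplus\g_1,\qquad \g_i=\{x\in\g\mid d\theta(x)=(-1)^i x\},
\]
is precisely a $\mathbf Z_2$-grading of $\g$ in the sense of \eqref{gradC}. Furthermore, $\g_0={\rm Lie}\,G_0$, so the subgroup $G_0$ coincides with the one singled out in \eqref{G0}. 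The tangent space $T_{eG_0}X$ identifies $G_0$-equivariantly with $\g/\g_0\cong\g_1$, and under this identification the isotropy representation of $X$ at the base point becomes the adjoint representation $\varrho\colon G_0\to{\rm GL}(\g_1)$.

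Next I would match the two notions of rank. By definition, the rank of the symmetric space $X$ is the common dimension of a maximal subspace of $\g_1$ consisting of semisimple pairwise commuting elements (a Cartan subspace in the sense of \cite{V76}); this is exactly the rank $r$ of the $\mathbf Z_2$-graded Lie algebra $\g=\g_0\oplus\g_1$ appearing in Theorem \ref{mr}.

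With these identifications in place, Theorem \ref{mr} applies directly: $\varrho$ is modality-regular and ${\rm mod}\,\varrho=r$, which by the previous paragraph equals the rank of $X$. There is no real obstacle beyond the bookkeeping step of verifying that the classical involution/symmetric-space data satisfies the axioms of the $\theta$-group framework for $m=2$; the substantive content (visibility of $\varrho$ and the equality $\dim\g_1/\!\!/G_0=r$) has already been absorbed into Theorems \ref{visact} and \ref{mr}.
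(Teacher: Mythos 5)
Your proposal is correct and follows exactly the route the paper intends: Corollary \ref{c2} is stated as an immediate specialization of Theorem \ref{mr} to $m=2$, with the only content being the standard identifications you spell out (involution $\leftrightarrow$ $\mathbf Z_2$-grading, isotropy representation $\leftrightarrow$ adjoint action of $G_0$ on $\g_1$, rank of $X$ $\leftrightarrow$ rank of the graded Lie algebra). The paper leaves this bookkeeping implicit; you have simply made it explicit.
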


\subsection{} We now turn to classifying representations of a small modality.\;First, note that the following finiteness theorem can be considered as an argument in favor of considering the problem of classifying representations in terms of the magnitude of the modality.

\begin{theorem}[{{\rm finiteness for modality}}] For every  connected simisimple algebraic group $G$ and every integer $m\geqslant 0$, there are only
finitely many {\rm (}up to equivalence\,{\rm )} linear representations of $\,G$ of modality  $m$.
\end{theorem}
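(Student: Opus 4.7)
The plan is to convert a bound on modality into a bound on dimension, and then use the finiteness of representations of a semisimple group of bounded dimension.

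First I would note that $V^{\mathrm{reg}}\subseteq V$ is itself a family of $G$-orbits (all of dimension $m^{\ }_{G,V}:=\max_{v\in V}\dim G\cdot v$), so by \eqref{mo},
\begin{equation*}
{\rm mod}(G:V^{\rm reg})=\dim V-m^{\ }_{G,V}.
\end{equation*}
Since $V^{\mathrm{reg}}\in\mathscr F(V)$, the definition \eqref{mod} of modality gives ${\rm mod}\,\varrho\geqslant \dim V-m^{\ }_{G,V}$. Because orbits lie in $G$, one has $m^{\ }_{G,V}\leqslant \dim G$, so
\begin{equation*}
\dim V\;\leqslant\;{\rm mod}\,\varrho+\dim G\;=\;m+\dim G.
\end{equation*}
Thus the problem reduces to proving that, for fixed connected semisimple $G$ and fixed $N$, there are only finitely many (equivalence classes of) linear representations of $G$ of dimension at most $N$.

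For the reduction, I would use the standard fact that every finite-dimensional representation of a connected semisimple algebraic group $G$ decomposes as a direct sum of irreducibles $V_\lambda$ indexed by dominant weights $\lambda$, and that the Weyl dimension formula
\begin{equation*}
\dim V_\lambda=\prod_{\alpha>0}\frac{\langle\lambda+\rho,\alpha\rangle}{\langle\rho,\alpha\rangle}
\end{equation*}
is a polynomial in $\lambda$ which tends to infinity as $\lambda$ leaves any bounded region of the dominant Weyl chamber. Hence the set $\{\lambda:\dim V_\lambda\leqslant N\}$ of dominant weights is finite. Any representation of dimension $\leqslant N$ is determined by the multiplicities with which these finitely many $V_\lambda$ occur, and each multiplicity is bounded by $N$; so the set of equivalence classes of representations of dimension $\leqslant N$ is finite.

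Combining the two steps: a representation $\varrho$ with ${\rm mod}\,\varrho=m$ has $\dim\varrho\leqslant m+\dim G$, and there are only finitely many such $\varrho$ up to equivalence. I do not anticipate a real obstacle here: the only nontrivial input is the elementary dimension bound $\dim V\leqslant m+\dim G$, which follows immediately from the definition of modality and the regular sheet; the remainder is standard highest-weight theory.
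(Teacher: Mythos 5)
Your proof is correct and follows essentially the same route as the paper: first bound the dimension of any representation of modality $m$, then invoke the finiteness of representations of a fixed (or bounded) dimension via the Weyl dimension formula and complete reducibility. Your explicit bound $\dim V\leqslant m+\dim G$, obtained from the regular sheet, is in fact the precise form of the inequality the reduction needs (the paper cites ${\rm mod}\,\varrho\leqslant \dim\varrho$, which by itself only bounds modality by dimension; the direction you derive is the one that actually yields the dimension bound), so your write-up is, if anything, slightly more careful at that step.
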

\begin{proof} First note that $G$ has only finitely many  {\rm (}up to equivalence{\rm )}
linear representations of any fixed dimension.\;Indeed, given the complete reducibility
of representations, it suffices to prove this for irreducible representations. Denote by
$\varrho(\lambda)$ the irreducible representation of $G$ with the highest weight $\lambda$
regarding a fixed Borel subgroup and its torus $T$.\;We may (and shall) assume that $G$ is simply connected; let then $\varpi_1,\ldots,\varpi_r$ be the fundamental weights of $T$.\;It follows from the Weyl formula for $\dim \varrho(\lambda)$, see
\cite[Chap.\;VIII, \S4, (41)]{J62},  that
$\dim \varrho(\sum_{i=1}^r n_i\varpi_i)
<\dim \varrho(\sum_{i=1}^r m_i\varpi_i
)$ if $0 \leqslant n_i\leqslant m_i$ for all $i$ and
$n_{i_0}< m_{i_0}$ for some\;$i_0$.\;This implies the
finiteness statement.

The claim of the theorem now follows from this statement combined with the inequality
${\rm mod}\,\varrho\leqslant \dim\varrho$ that stems from
the definition of ${\rm mod}\,\varrho$.
\end{proof}

\subsection{}\label{clcl}
In Theorem \ref{cclassification} below, we classify irreducible representations
of connec\-ted simple algebraic groups
of modalities $0, 1$, and $2$.\;We use in this theorem  the following agreements and notation.

 Let $G$ be a a connected semisimple algebraic group and let
 $\pi\colon \widetilde G\to G$ be its universal covering.\;The map  $\varrho\mapsto
 \widetilde \varrho:=\varrho\circ\pi$ is
 a bijection between the set of all representations of
 $G$ and the set of all representations of $\widetilde G$ factoring through $G$.\;This allows one to specify $\varrho$ by specifying $\widetilde \varrho$.\;If  $\widetilde \varrho$ is irreducible, it is uniquely up to equivalence determined by its highest weight  $\lambda$ (with respect to a fixed Borel subgroup $B$ of $\widetilde G$ and its maximal torus $T$).\;Given this, we denote $\varrho$ (considered up to equivalence) by $({\sf R}, \lambda)$, where ${\sf R}$ is the type of the root system of $\widetilde G$.\;The fundamental weights of $\widetilde G$ with respect to the pair $(B, T)$ are denoted by $\varpi_1, \ldots, \varpi_r$; we use their Bourbaki numbering \cite{Bou68}.\;For ${\sf R}={\sf A}_r, {\sf B}_r, {\sf C}_r, {\sf D}_r$, we assume that, respectively, $r\geqslant 1, 3, 2, 4$.\;The group of characters of $T$ is considered in additive notation.\;The representation contragredient to
$\varrho$ is denoted by  $\varrho^*$.

\begin{theorem}[{{\rm
irreducible representations
of simple algebraic groups
of modalities
$0, 1, 2$}}]\label{cclassification}\hskip -1mm Let $G$ be a connected simple algebraic group and let $\,V$\;be a $G$-module determined by a nontrivial irreducible representation $\varrho\colon G\!\to\! {\rm GL}(V)$.
\begin{enumerate}[\hskip 4.2mm ${\rm(M_1)}$]
\item[${\rm(M_1)}$]
The condition ${\rm mod}\,\varrho=0$ is equivalent to either of the following{\rm :}
\begin{enumerate}[\hskip 0mm \rm(i)]
\item $k[V]^G=k$;
\item the action of $\,G$ on $V$ is nonstable;
\item $\varrho$ or $\varrho^*$ is contained in the following list:
 $$({\sf A}_r, \varpi_1); ({\sf A}_r, \varpi_2), r\geqslant 4
\mbox{ \it is even\,}; ({\sf C}_r, \varpi_1);
({\sf D}_5, \varpi_5).$$
\end{enumerate}
\item[${\rm(M_2)}$] The condition ${\rm mod}\,\varrho=1$ is equivalent to either of the following{\rm :}
\begin{enumerate}[\hskip 0mm \rm(i)]
\item ${\rm tr\,deg}_k k[V]^G=1$;
\item $V/\!\!/G={\mathbf A}^1$;
\item $\varrho$ or $\varrho^*$ is contained in the following list:
\begin{gather*}
({\sf A}_r, 2\varpi_1);
({\sf A}_r, \varpi_2), r\geqslant 3 \mbox{ \it is odd\,};
({\sf B}_r, \varpi_1); ({\sf D}_r, \varpi_1); \\
({\sf A}_1, 3\varpi_1); ({\sf A}_5, \varpi_3); ({\sf A}_6, \varpi_3);  ({\sf A}_7, \varpi_3); ({\sf B}_3, \varpi_3);
({\sf B}_4, \varpi_4);
 ({\sf B}_5, \varpi_5);\\
 ({\sf C}_2, \varpi_2); ({\sf C}_3, \varpi_3); ({\sf D}_6, \varpi_6); ({\sf D}_7, \varpi_7);
({\sf G}_2, \varpi_1); ({\sf E}_6, \varpi_1);
({\sf E}_7, \varpi_7).
\end{gather*}
\end{enumerate}
\item[${\rm(M_3)}$] The condition ${\rm mod}\,\varrho=2$ is equivalent to either of the following {\rm :}
\begin{enumerate}[\hskip 0mm \rm(i)]
\item ${\rm tr\,deg}_k k[V]^G=2$;
\item $V/\!\!/G={\mathbf A}^2$;
\item $\varrho$ or $\varrho^*$ is contained in the following list:
\begin{gather*}
 ({\sf A}_1, 4\varpi_1); ({\sf A}_2, \varpi_1+\varpi_2); ({\sf A}_2, 3\varpi_1);  ({\sf B}_6, \varpi_6);\\
 ({\sf C}_2, 2\varpi_1);
({\sf C}_3, \varpi_2);
({\sf F}_4, \varpi_4); ({\sf G}_2, \varpi_2).
\end{gather*}
\end{enumerate}
\end{enumerate}

If ${\rm mod}\,\varrho\leqslant 2$, then $\varrho$ is modality-regular.
\end{theorem}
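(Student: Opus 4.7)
The plan is to reduce the modality computation to the dimension of the categorical quotient by combining the inequality \eqref{redmo} with Theorem \ref{visact}. First, \eqref{redmo} gives $\mathrm{mod}\,\varrho\geqslant \dim V/\!\!/G$, so the hypothesis $\mathrm{mod}\,\varrho\leqslant 2$ forces $\dim V/\!\!/G\leqslant 2$. Conversely, once each representation in the three lists of (iii) is shown to be visible, Theorem \ref{visact} simultaneously yields $\mathrm{mod}\,\varrho=\dim V/\!\!/G$ and modality-regularity. Thus the theorem reduces to (a) the internal equivalences (i)$\Leftrightarrow$(ii)$\Leftrightarrow$(iii) in each block, and (b) visibility of every entry of the list.

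For the internal equivalences (i)$\Leftrightarrow$(ii) I argue as follows. Condition (i) simply asserts that $\dim V/\!\!/G$ equals $0$, $1$, or $2$ in the three cases. In $(\mathrm{M}_1)$, (i)$\Leftrightarrow$(ii) is the classical fact that, for a nontrivial irreducible representation of a semisimple group, triviality of the invariants is equivalent to nonstability: if $k[V]^G=k$ then the null-cone is all of $V$, only $\{0\}$ is closed, and $\varrho$ is nonstable; conversely, nonstability combined with irreducibility forces the null-cone to fill $V$. In $(\mathrm{M}_2)$, (i)$\Rightarrow$(ii) is automatic, because the scalar $\mathbf{G}_m$-action commutes with $G$, so $k[V]^G$ is a positively-graded normal finitely generated $k$-domain with trivial degree-zero part, and any such one-dimensional algebra is a polynomial ring $k[x]$ in a single generator of positive degree, giving $V/\!\!/G\cong\mathbf{A}^1$. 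In $(\mathrm{M}_3)$ a two-dimensional positively-graded normal domain need not be polynomial, so (i)$\Rightarrow$(ii) has to be verified entry by entry once the list is established: all entries turn out to be coregular.

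To prove (iii)$\Rightarrow$(i,ii) together with visibility, a substantial portion of the list comes from cyclic gradings and is covered directly by Theorem \ref{mr}. The adjoint and isotropy representations of rank $\leqslant 2$ supply a number of entries, and exceptional items such as $(\mathsf{E}_6,\varpi_1)$, $(\mathsf{E}_7,\varpi_7)$, $(\mathsf{F}_4,\varpi_4)$, and $(\mathsf{G}_2,\varpi_2)$ arise as the degree-one pieces of suitable $\mathbf{Z}_m$-gradings of simple Lie algebras. The remaining entries are visible by Kac's classification \cite{Ka80}, and the dimensions of their categorical quotients can be read off from the tables of \cite{SK77} and \cite{PV94}. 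Hence every $\varrho$ in the combined list is visible with the claimed $\dim V/\!\!/G$, and is modality-regular by Theorem \ref{visact}(i).

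The main obstacle, and the core combinatorial content, is the converse (i)$\Rightarrow$(iii): one must show that the three lists exhaust all irreducible representations of simple algebraic groups with $\dim V/\!\!/G\leqslant 2$. I would proceed type by type through the Dynkin diagrams, using the bound $\dim V/\!\!/G\leqslant \dim V-\max_{v\in V}\dim G\cdot v$ and the classification of generic stabilisers from \cite{P74}, \cite{PV94} to discard representations whose generic orbit has codimension exceeding $2$. The finiteness argument of the preceding theorem (via the Weyl dimension formula) guarantees that only finitely many candidates survive for each fixed codimension $\leqslant 2$, and matching these one by one against the lists finishes the classification. The final assertion---that $\mathrm{mod}\,\varrho\leqslant 2$ implies $\varrho$ is modality-regular---is then immediate by Theorem \ref{visact}(i), since every representation satisfying this bound lies in the combined list and is therefore visible.
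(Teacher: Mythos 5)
Your treatment of the ``easy'' direction agrees in outline with the paper: once each listed representation is known to be visible (by comparison with \cite[Thm.\;1]{Ka80}), Theorem \ref{visact} gives ${\rm mod}\,\varrho=\dim V/\!\!/G$ together with modality-regularity, and \eqref{redmo} gives $\dim V/\!\!/G\leqslant 2$ from ${\rm mod}\,\varrho\leqslant 2$. The genuine gap is in the exhaustiveness direction, i.e.\ the claim that every irreducible $\varrho$ with ${\rm mod}\,\varrho\leqslant 2$ occurs in the lists. The paper does \emph{not} run a type-by-type enumeration over Dynkin diagrams. Its key step is Lemma \ref{freealg}: $\dim V/\!\!/G\leqslant 2$ forces $V/\!\!/G$ to be an affine space --- by L\"uroth in dimension $1$, and by Kempf's theorem \cite{K80} in dimension $2$ for connected semisimple $G$ --- hence $\varrho$ is cofree, hence $\varrho$ appears in the known finite classification of cofree irreducible representations of simple groups \cite{KPV76}, all of whose members turn out to be visible. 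You explicitly flag the two-dimensional case as a difficulty (``a two-dimensional positively-graded normal domain need not be polynomial'') and propose to settle it ``entry by entry once the list is established'', but your route to the list itself does not close: filtering by the codimension of the generic orbit only controls ${\rm mod}(G:V^{\rm reg})$, so after discarding candidates with generic-orbit codimension $>2$ you are left with finitely many representations for which you must still bound the modality of \emph{all} sheets, and you give no mechanism for certifying ${\rm mod}\,\varrho\geqslant 3$ for a surviving candidate that fails to be modality-regular (Example \ref{exmo} shows that such failures really occur for ${\rm SL}_n$, $n\geqslant 3$). Without Lemma \ref{freealg}, or some equivalent a priori structural constraint, the proposed enumeration is not a proof.

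A secondary gap concerns ${\rm (M_1)}$(ii). Your claim that nonstability of a nontrivial irreducible action forces the null-cone to fill $V$, i.e.\ $k[V]^G=k$, is asserted rather than proved, and there is no soft argument for it. The paper instead obtains the equivalence of ${\rm (M_1)}$(ii) and ${\rm (M_1)}$(iii) from Popov's criterion \cite[Thm.\;1]{P71} (nonstability is equivalent to nonreductivity of the stabilizer of a point in general position) combined with the tabulated generic stabilizers of the cofree representations in \cite[pp.\;259--262]{PV94}.
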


First we prove the following

\begin{lemma}\label{freealg} Let $G$ be a connected semisimple algebraic group and let $\,V$ be a $G$-module such that ${\rm mod}(G:V)\leqslant 2$. Then $k[V]^G$ is a free $k$-algebra.
\end{lemma}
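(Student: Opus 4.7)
The plan is to apply \eqref{redmo} and then reduce via graded Nakayama's lemma to proving smoothness of $V/\!\!/G$ at the image of the origin. By \eqref{redmo}, $\dim V/\!\!/G \leqslant {\rm mod}(G:V) \leqslant 2$. Set $A := k[V]^G$; as the invariants of a reductive group acting on a polynomial ring, $A$ is a finitely generated positively graded normal $k$-algebra with $A_0 = k$ and Krull dimension $d := \dim V/\!\!/G \in \{0,1,2\}$. The grading equips ${\rm Spec}\,A$ with a good ${\bf G}_m$-action contracting every orbit to the unique fixed point $o := \pi_{G,V}(0)$, which corresponds to the irrelevant maximal ideal $\mathfrak{m}_+ = \bigoplus_{n>0} A_n$.

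The key observation is that $A$ is a polynomial algebra if and only if ${\rm Spec}\,A$ is smooth at $o$, i.e., $\dim_k \mathfrak{m}_+/\mathfrak{m}_+^2 = d$. Indeed, by graded Nakayama applied to the graded $A$-module $\mathfrak{m}_+$, any system of homogeneous lifts of a $k$-basis of $\mathfrak{m}_+/\mathfrak{m}_+^2$ generates $A$ as a $k$-algebra; if there are exactly $d$ such generators, then they must be algebraically independent in the $d$-dimensional integral domain $A$, and so $A \cong k[t_1,\ldots,t_d]$. For $d \in \{0, 1\}$ the smoothness at $o$ is automatic: $d = 0$ gives $A = k$ trivially, and when $d = 1$ normality of the one-dimensional affine variety ${\rm Spec}\,A$ forces regularity at every point.

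The content therefore lies in the case $d = 2$, which is the main obstacle: one must show that ${\rm Spec}\,A$ is smooth at $o$, or equivalently, that $A$ admits exactly two minimal homogeneous generators. The most direct route is a case-by-case verification using the classifications of semisimple $G$-modules with small $\dim V/\!\!/G$ available in \cite{SK77}, \cite{Ka80}, \cite{KKY86}, checking polynomiality on each entry. A more structural argument is not immediate: Luna's \'etale slice theorem allows one to relate $V/\!\!/G$ near $\pi_{G,V}(x)$ for a non-trivial closed orbit $G \cdot x \subset V$ to the slice quotient $N_x/\!\!/G_x$ near the origin, but it yields nothing new at $o$ itself because $\{0\}$ is already the unique closed orbit through $0 \in V$. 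One might alternatively try to derive a contradiction by assuming singularity at $o$ (i.e., more than two minimal homogeneous generators) and exhibiting a family of orbits in $V$ with modality strictly greater than $2$; in view of the explicit character of Theorem~\ref{cclassification}, however, I expect the classification route to be the one the author actually takes.
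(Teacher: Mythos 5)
Your reduction is sound up to the point where it matters: \eqref{redmo} gives $\dim V/\!\!/G\leqslant 2$, the graded--Nakayama argument correctly identifies polynomiality of $A=k[V]^G$ with smoothness of $V/\!\!/G$ at the image $o$ of the origin, and your treatment of $d=0$ and $d=1$ is complete (for $d=1$ you use normality of the one-dimensional ring $A$ where the paper appeals to the L\"uroth theorem via \cite[Prop.\;12]{P80}; both work). But the case $d=2$, which you yourself flag as ``the main obstacle,'' is left unproven: you only discuss how one \emph{might} handle it. Since the entire content of the lemma sits in that case, this is a genuine gap.

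Moreover, the route you guess at is neither executable as described nor the one the paper takes. The references \cite{SK77}, \cite{Ka80}, \cite{KKY86} classify \emph{irreducible} prehomogeneous, visible, and finite-orbit representations; they do not list all (possibly reducible) modules of a connected semisimple group with $\dim V/\!\!/G=2$, and the lemma is stated for arbitrary $G$-modules of arbitrary semisimple $G$, so a case-by-case check cannot be run off those tables. The structural statement you dismiss as ``not immediate'' is exactly the missing ingredient and exactly what the paper invokes: Kempf's theorem \cite{K80} (conjectured in \cite[Sect.\;7, Rem.\;$2^\circ$]{P77}) that for a connected semisimple group $G$ a two-dimensional quotient $V/\!\!/G$ is smooth, whence $V/\!\!/G={\bf A}^2$ by your own cone-point argument. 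Adding that citation closes your proof; without it, it does not.
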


\begin{proof} By \eqref{redmo}, we have  $\dim V/\!\!/G\leqslant 2$.\;If
$\dim V/\!\!/G=0$, then $k[V]^G=k$.\;If $\dim V/\!\!/G=1$, then it follows from the L\"uroth theorem that $V/\!\!/G={\bf A}^1$; see \cite[Prop.\;12]{P80} (this is true for any reductive $G$).\;If $\dim V/\!\!/G=2$, then, as is proved in \cite{K80} (and first conjectured in \cite[Sect.\;7, Rem.\;$2^\circ$]{P77}),
the assumption that
$G$ is connected semisimple entails $V/\!\!/G={\bf A}^2$ (cf.\;also \cite[Sect.\;8.4]{PV94}).\;This completes the proof.
\end{proof}

\begin{proof}[Proof of Theorem  {\rm \ref{cclassification}}]
By Lemma \ref{freealg}, if ${\rm mod}\,\varrho\leqslant 2$, then
$\varrho$ is cofree (i.e., $k[V]^G$ is a free $k$-al\-geb\-ra).\;The
list of all cofree irredu\-cible representations of connected simple algebraic group
is obtained in \cite{KPV76} (see also Summary Table in \cite[pp.\;259--262]{PV94}).\;This leads to determining which representations from this list have modality $\leqslant 2$.\;Comparing this list with the list
of all irreducible visible representations obtained in   \cite[Thm.\;1]{Ka80} shows that
every representation $\varrho$ from the former list is visible.\;Whence, by Theorem \ref{visact}, it is modality-regular and
${\rm mod}\,\varrho=\dim V/\!\!/G$. Since the integers $\dim V/\!\!/G$ are
known (they are specified in the fifth column of the Summary Table in \cite[pp.\;259--262]{PV94}), this yields the lists in
${\rm(M_1)}$(iii), ${\rm(M_2)}$(iii), and ${\rm(M_3)}$(iii), thereby proving all the claims except ${\rm(M_1)}$(ii).\;By \cite[Thm.\;1]{P71},
$\varrho$ is nonstable if and only if the $G$-stabilizer of a point in general position in $V$ is nonreductive.\;Since the stabilizers of points in general position for the representations from this list are known  as well (they are specified in the fourth column of the Summary Table in \cite[pp.\;259--262]{PV94}), applying this criterion yields that
${\rm(M_1)}$(ii) and
${\rm(M_1)}$(iii) are equivalent.\;This completes the proof.
\end{proof}

\begin{remarks}\label{rem}{\rm \

 1. In the following statements, the assumptions of the irreducibility of $\rho$ and the simplicity of $G$ are essential:

(a)  In ${\rm(M_1)}$,  in the claims that $k[V]=k$ or the nonstability of the action of $G$ on $V$
implies ${\rm mod}\,\varrho=0$.\;Examples:

---\,If $\rho=2({\sf A}_r, \varpi_1)$ and $r\geqslant 2$, then $k[V]^G=k$, the action of $G$ on $V$ is nonstable, and
${\rm mod}\,\rho>0$; see Example \ref{exmo}.

---\,If
$\varrho=({\sf A}_r, \varpi_2)\otimes ({\sf A}_1, \varpi_1)$, where $r\geqslant 8$ is even, then $k[V]^G=k$, see \cite{L89},
and the action of $G$ on $V$ in nonstable, see  \cite[Thm.\;3.3, Cor.\;of Thm.\;2.3]{PV94}, but the number of $G$-orbits in $V$ is infinite (equivalently, ${\rm mod}\,\varrho>0$);
see \cite{SK77}, \cite{K80}.

(b) In ${\rm(M_2)}$,  in the claim that ${\rm tr\,deg}_kk[V]^G=1$ is equivalent to ${\rm mod}\,\varrho=1$.\;For example, if $\rho=(r+1)({\sf A}_r, \varpi_1)$ and $r\geqslant 2$, then
${\rm tr\,deg}_k k[V]^G=1$, and ${\rm mod}\,\rho \geqslant r$; see \;Example \ref{exmo}.

(c) In ${\rm(M_3)}$, in the claim that ${\rm mod}\,\varrho=2$ implies ${\rm tr\,deg}_kk[V]^G=2$.\;For example, if $\rho=3({\sf A}_r, \varpi_1)$ and $r\geqslant 3$, then it is not difficult to see that $k[V]^G=k$ and ${\rm mod}\,\rho=2$.

The representations $\rho$ specified in (a), (b), (c) are not modality-regular.

\vskip 1mm

2.\;Arguing along the same lines, one can extend the classifications obtained in Theorem \ref{cclassification} up to the classifications of all irreducible castling reduced repre\-sentations
of connected semisimple algebraic groups of every fixed moda\-li\-ty $\leqslant 2$.\;The reason being
that
the complete list of
cofree  irreducible castling reduced representations
of connected semisimple algebraic groups is known \cite{L89}. Com\-paring it with that of visible representations obtained in \cite{SK77}, \cite{Ka80}, \cite{KKY86}, one ascertains that  the majority of representations in this list (all but two) are visible, therefore, for them, the modality is given by Theorem \ref{visact} (the integers $\dim V/\!\!/G$ are specified in  the fifth column of \cite[Tabelle]{L89}).\;To the remaining two representations one applies the ad hoc considerations.

\vskip 1mm

3.\;The arguments from the proof of
 Theorem  {\rm \ref{cclassification}}
 yield
the modalities of all cofree irredu\-cible represen\-tations of connected simple algebraic groups.
}
\end{remarks}

\subsection{} We conclude this section with a statement,
which in some cases helps to practically determine the modality.

\begin{lemma}\label{SI} Let $X$ be an algebraic variety endowed with an action of an algebraic group $G$.\;Let $\{C_i\}_{i\in I}$ be a collection of subsets of $X$ such that
\begin{enumerate}[\hskip 2.0mm\rm(i)]
\item $I$ is finite;
\item $\bigcup_{i\in I}C_i=X$;
\item the closure $\overline{C_i}$ of $\,C_i$ in $X$ is irreducible for every $i\in I$;
\item every $C_i$ is $G$-stable;
\item all $G$-orbits in $C_i$ have the same dimension $d_i$ for every $i\in I$.
\end{enumerate}
Then the following hold:
\begin{enumerate}[\hskip 4.2mm\rm(a)]
\item ${\rm mod}(G:X)=
{\rm max}_{i\in I}\big(\!\dim \overline{C_i}-d_i\big)$;
    \item if $X$ is irreducible, then $X=\overline{C_{i_0}}$ for some $i_0$,
    and ${\rm mod}(G:X^{\rm reg})=\dim X-d_{i_0}$.
\end{enumerate}
\end{lemma}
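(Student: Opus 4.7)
The plan is to prove (a) by two matching inequalities and then deduce (b) by a short density argument applied to the regular sheet. I shall repeatedly use the formula \eqref{mo} giving the modality of a family as its dimension minus the common orbit dimension, together with the fact (recorded just below \eqref{mod}) that ${\rm mod}(G:X)$ equals the maximum of ${\rm mod}(G:S)$ over the finitely many sheets $S$ of $X$, as well as the standard fact that the closure of a $G$-stable subset under an algebraic action is again $G$-stable.

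For the lower bound in (a), fix $i\in I$ and consider $\overline{C_i}^{{\rm reg}}$ as defined by \eqref{regreg}: by (iii) and (iv) it is locally closed in $X$, $G$-stable, and irreducible, and by construction all its $G$-orbits have the same (maximal over $\overline{C_i}$) dimension, so it is a family in $X$. The key observation is that $C_i$ is dense in $\overline{C_i}$ while $\overline{C_i}^{{\rm reg}}$ is open and dense in the irreducible $\overline{C_i}$, so the two subsets meet; any point of the intersection certifies, via (v), that the common orbit dimension on $\overline{C_i}^{{\rm reg}}$ equals $d_i$. Hence \eqref{mo} gives ${\rm mod}(G:\overline{C_i}^{{\rm reg}}) = \dim\overline{C_i}-d_i$, and maximizing over $i$ yields ${\rm mod}(G:X)\geqslant \max_{i\in I}(\dim\overline{C_i}-d_i)$.

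For the reverse inequality, let $S$ be any sheet of $X$. From (ii) we have $S=\bigcup_{i\in I}(S\cap C_i)$, so taking closures in $X$ and using irreducibility of $\overline{S}$ together with finiteness of $I$ supplies an $i_0\in I$ with $\overline{S\cap C_{i_0}}=\overline{S}$; in particular $S\cap C_{i_0}\neq\varnothing$, and comparing the constant orbit dimension on the family $S$ with (v) forces this common dimension to equal $d_{i_0}$. Since also $S\subseteq\overline{C_{i_0}}$, we conclude ${\rm mod}(G:S)=\dim S-d_{i_0}\leqslant \dim\overline{C_{i_0}}-d_{i_0}$, and maximizing over sheets finishes (a). For (b), irreducibility of $X$ applied to the decomposition $X=\bigcup_i\overline{C_i}$ forces $\overline{C_{i_0}}=X$ for some $i_0$; then $C_{i_0}$ is dense in $X$, hence meets the dense open sheet $X^{{\rm reg}}$, and the same density reasoning as in the lower bound shows that the common orbit dimension on $X^{{\rm reg}}$ equals $d_{i_0}$, whence ${\rm mod}(G:X^{{\rm reg}})=\dim X-d_{i_0}$ by \eqref{mo}. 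I do not expect a serious obstacle here; the only delicate point is transferring hypothesis (v), which constrains orbit dimensions only inside the possibly ill-behaved set $C_i$, to the neat locally closed family $\overline{C_i}^{{\rm reg}}$ on which the modality is actually computed, and this transfer is handled entirely by the density of $C_i$ in $\overline{C_i}$.
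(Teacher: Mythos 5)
Your proof is correct and takes essentially the same route as the paper's: the lower bound comes from the families $\overline{C_i}^{\rm reg}$ (the paper records the slightly stronger containment $C_i\subseteq\overline{C_i}^{\rm reg}$ via upper semicontinuity of orbit dimension, where you use density of $C_i$ in $\overline{C_i}$ to the same effect), and the upper bound from trapping a maximizing family --- a sheet $S$ in your version, an arbitrary $Z\in{\mathscr F}(X)$ in the paper's --- inside a single $\overline{C_{i_0}}$ by irreducibility and finiteness of $I$, with part (b) handled identically. No gaps.
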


\begin{proof}

By (iii), we have a family ${\overline {C_i}}^{\rm reg}$, and (v) implies
$C_i\subseteq {\overline {C_i}}^{\rm reg}$.
Whence
\begin{equation}\label{Cm}
{\rm mod}(G:\overline {C_i}^{\rm reg})=\dim \overline {C_i}-d_i.
\end{equation}
From \eqref{mod} and \eqref{Cm}, we infer that
${\rm mod}(G:X)\geqslant \underset{i\in I}{\max} (\dim {\overline C}_i-d_i)$.\;To prove the op\-po\-site inequality let $Z\in {\mathscr F}(X)$ be a family of $s$-dimensional $G$-orbits such that ${\rm mod}(G:X)=\dim Z-s$
and let
$J:=\{i\in I \mid Z\cap C_i\neq \varnothing\}.$
 By (ii), we have $Z=\bigcup_{j\in J}(Z\cap {\overline {C_j}})$.\;Since $Z$ is irreducible and,
  by (i),  $J$ is finite, there is $j_0\in J$ such
 that $Z\subseteq {\overline {C_{j_0}}}$.\;As $Z\cap C_{j_0}\neq \varnothing$,
 we have $s=d_{j_0}$.\;Therefore,
 ${\rm mod}(G:X)=\dim Z-s\leqslant \dim
 \overline {C_{j_0}}-d_{j_0}$.\;This
 proves (a).

By (ii), $\bigcup_{i\in I} \overline{C_i}=X$.\;If $X$ is irreducible, then, in view of (i), this equality implies the existence of $i_0$ such that $X=\overline{C_{i_0}}$.\;This and \eqref{Cm}
prove (b).
\end{proof}

\begin{example} Consider the graded semisimple Lie algebra \eqref{gradC} and the adjo\-int representation $\varrho\colon G_0\to {\rm GL}(\g_1)$.\;In Section \ref{secpac} we define a collection\;$\{C_i\}_{i\in I}$ of subsets of $\g_1$ called packets (see below Definition \ref{packet}).\;In Proposi\-tions \ref{cover}, \ref{pf},  \ref{propac} below we prove that this collection satisfies all conditions (i)--(v) from Lemma \ref{SI}, and in Corollary \ref{mopa} below we compute ${\rm mod}(G_0:\overline {C_i})$ for every $i$.\;The integer
$\max_{i\in I}{\rm mod}(G_0:\overline {C_i})$ turns then out to be equal to
the rank of the graded Lie algebra \eqref{gradC}.\;By Lemma \ref{SI}, this agrees with
Theorem \ref{mr}.
\end{example}

\begin{corollary}\label{times-fixed}
 Let $\,Y$ and $F$ be the algebraic varieties endowed with the actions of an algebraic group $G$.\;Consider the diagonal action of $\,G$ on $X:=Y\times F$.\;If $\,G$ acts on $F$ trivially, then
 \begin{enumerate}[\hskip 4.2mm\rm(i)]
 \item ${\rm mod}(G:X)={\rm mod}(G:Y)+\dim F$;
 \item the action of $\,G$ on $Y$ is modality-regular if and only if the action of $\,G$ on $X$ is modality-regular.
 \end{enumerate}
\end{corollary}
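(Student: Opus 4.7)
The plan is to reduce part (i) to Lemma \ref{SI} via a natural product cover, and then to deduce (ii) by computing $X^{\rm reg}$ directly. Throughout I shall use the key observation that, since $G$ acts trivially on $F$, every orbit has the form $G\cdot(y,f) = (G\cdot y)\times\{f\}$, and in particular the orbit dimension at $(y,f)\in X$ equals $\dim G\cdot y$.

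For (i), let $S_1,\dots,S_n$ be the sheets of $Y$ (finite in number, as recalled before \eqref{mtr}; they cover $Y$ since each orbit is itself a family and every family is contained in a sheet), let $d_i$ denote the common orbit dimension in $S_i$, and let $F_1,\dots,F_\ell$ be the irreducible components of $F$. Setting $C_{ij}:=S_i\times F_j$, the hypotheses of Lemma \ref{SI} are straightforward to verify: finiteness and coverage are clear, each $C_{ij}$ is $G$-stable (the first factor is $G$-stable and the second is fixed pointwise), the closure $\overline{C_{ij}} = \overline{S_i}\times F_j$ is irreducible as a product of irreducibles, and by the key observation all $G$-orbits in $C_{ij}$ have dimension $d_i$. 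Lemma \ref{SI}(a) then yields
\[
{\rm mod}(G:X) \;=\; \max_{i,j}\bigl(\dim\overline{S_i} + \dim F_j - d_i\bigr) \;=\; \max_i\bigl(\dim\overline{S_i} - d_i\bigr) \;+\; \max_j \dim F_j,
\]
and since $\dim\overline{S_i} = \dim S_i$, the sheet description of modality identifies the first summand with ${\rm mod}(G:Y)$, while the second is $\dim F$ by definition.

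For (ii), when $X$ is irreducible so are $Y$ and $F$, and the key observation shows $X^{\rm reg} = Y^{\rm reg}\times F$. Writing $m_Y$ for the maximal $G$-orbit dimension in $Y$, one obtains
\[
{\rm mod}(G:X^{\rm reg}) \;=\; \dim X - m_Y \;=\; (\dim Y - m_Y) + \dim F \;=\; {\rm mod}(G:Y^{\rm reg}) + \dim F.
\]
Subtracting from the identity in (i) gives ${\rm mod}(G:X) - {\rm mod}(G:X^{\rm reg}) = {\rm mod}(G:Y) - {\rm mod}(G:Y^{\rm reg})$, and since both differences are nonnegative, one vanishes iff the other does, proving (ii).

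I do not anticipate any substantial obstacle: the proof amounts to bookkeeping once it is noticed that the sheets of $X$ are products of sheets of $Y$ with irreducible components of $F$, and that orbit dimensions are pulled back from $Y$. The only small care needed is treating the possibly reducible $F$ in (i) via its components so that $\dim F = \max_j \dim F_j$ emerges from the outer maximum.
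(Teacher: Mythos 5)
Your proof is correct and follows essentially the same route as the paper, whose one-line proof applies Lemma \ref{SI} to the cover of $X$ by the sets $S_i\times F$ with $S_i$ the sheets of $Y$. Your additional step of splitting $F$ into its irreducible components (so that the closures $\overline{C_{ij}}$ are irreducible, as condition (iii) of Lemma \ref{SI} requires) is a sensible refinement that the paper leaves implicit, and your direct computation of $X^{\rm reg}=Y^{\rm reg}\times F$ for part (ii) is a correct substitute for invoking Lemma \ref{SI}(b).
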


\begin{proof} Let $\{S_i\}_{i\in I}$ be the collection of all sheets of $Y$.\;The claim then follows from Lemma \ref{SI} applied to $\{C_i\}_{i\in I}$  where $C_i:=S_i\times F$.
\end{proof}

\section{\bf Packets in cyclically graded semisimple Lie algebras}\label{secpac}

 In this section
we explore the finer geometric properties of
$\theta$-group actions.

\subsection{}
Besides the notation of  Subsection \ref{thetagroups} below we also use the following:

\begin{enumerate}[\hskip 4.2mm 
$\raisebox{.4\height}{\mbox{$\centerdot$}}$]
\item If $\a$ and $\b$ are nonempty subsets of $\g$, then ${\a}^{\hskip -.2mm \b}\;$ is the
centralizer of $\b$ in $\a$,
\begin{equation*}\label{zentralizer}
{\a}^{\hskip -.2mm \b}:=\{x\in
\a\mid [x, y]=0 \text{ for all }
y\in \b\},
\end{equation*}
and $[\a,\b]$ is the $k$-linear span of all $[x, y]$, where $x\in\a$, $y\in\b$.

\item The center  of $\a$, i.e., $\a^{\hskip -.2mm\a}$, is denoted by
$\z(\a)$.

\item For any subset $\s$ in $\g$, we put $\s_i:=\s\cap \g_i$ (the case $\s\cap \g_i=\varnothing$ is not excluded).

\end{enumerate}

The following facts, used below, are proved in \cite{V76}:

There exists a nondegenerate $G$-invariant and $\theta$-invariant scalar multiplica\-tion $\g\times \g\to k$, $(x, y)\mapsto\langle x\,{,}\,y\rangle$.

If $x=x_{\rm s}+x_{\rm n}$ is the Jordan decomposition of an element $x\in \g$ with $x_{\rm s}$ semisimple and $x_{\rm n}$ nilpotent, then $x\in \g_i$ entails $x_{\rm s}, x_{\rm n}\in \g_i$.

There are only finitely many nilpotent $G_0$-orbits in every $\g_i$.

\subsection{}
We first consider the following general construction and introduce the related terminology and notation.

\begin{definition}\label{ccc} Let $M$ be a nonempty set and let $F$ be a nonempty set of functions $M\to k$.\;Define the equivalence relation $\sim_F$ on $M$ by
\begin{equation*}
x\sim_F y \;\; \iff\;\; \mbox{for every $\alpha\in F$, either $\alpha(x)=\alpha(y)=0$ or
$\alpha(x)\alpha(y)\neq 0$.}
\end{equation*}
The equivalence classes of $\sim_F$ are called the {\it cells} of $\sim_F$. The set of all cells of $\sim_F$ is denoted by ${\mathscr C}_F(M)$.
\end{definition}

\begin{proposition}\label{cells} Let $\,V$ be a finite-dimensional vector space over $k$ and let $F$ be a finite subset of the dual space $V^*$.

\begin{enumerate}[\hskip 3.2mm\rm(i)]
\item For every linear subspace $L$ in $V^*$, the set
\begin{equation}\label{CL}
C_L:=\bigg\{v\in V \;\Big\vert\,
\begin{array}{ll}
\alpha(v)=0 &\mbox{for all $\alpha\in F\cap L$,}\\
\beta(v)\neq 0& \mbox{for all $\beta\in F\setminus L$.}
\end{array}
\!\!\bigg\}
\end{equation}
is nonempty.

\item A subset of $\,V$ is a cell of $\sim_F$ if and only if it is $C_L$ for some $L$.

\item The closure $\overline{C_L}$ of $\,C_L$ in $\,V$ is the linear subspace
\begin{equation}\label{closure}
\{v\in V\mid \mbox{$\alpha(v)=0$ for all $\alpha\in F\cap L$}\},
\end{equation}
and the complement of $\,C_L$ in $\overline{C_L}$ is the union of hyperplanes
\begin{equation*}
\textstyle\bigcup_{\beta\in F\setminus L}\{v\in {\overline {C_L}}\mid \beta(v)=0\}.
\end{equation*}
\item The set ${\mathscr C}_F(V)$ is finite.
\end{enumerate}
\end{proposition}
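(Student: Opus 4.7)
The plan is to reduce everything to one elementary fact about $V^*$ and the finite configuration $F$. The crucial observation is that the $\sim_F$-equivalence class of a vector $v\in V$ is determined by the subset $Z(v):=\{\alpha\in F\mid \alpha(v)=0\}$ of $F$, since the definition of $\sim_F$ says precisely that $w\sim_F v$ iff $Z(w)=Z(v)$. Since there are only finitely many subsets of the finite set $F$, this already proves (iv).

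For (iii) I would first establish the following lemma: for every $\beta\in F\setminus L$ the restriction of $\beta$ to the linear subspace $W:=\{v\in V\mid \alpha(v)=0\text{ for all }\alpha\in F\cap L\}$ is nonzero. Indeed, the covectors in $V^*$ vanishing on $W$ form precisely $\mathrm{span}(F\cap L)$, which is contained in $L$; hence $\beta\notin L$ forces $\beta|_W\neq 0$. Granting this lemma, $C_L$ is the complement in $W$ of the finite union $\bigcup_{\beta\in F\setminus L}\{v\in W\mid \beta(v)=0\}$ of proper linear hyperplanes of $W$, so $C_L$ is open and dense in the irreducible $W$. This proves both the description of $\overline{C_L}$ in \eqref{closure} and the complement description in (iii); and since an open dense subset of an irreducible variety is nonempty, this also proves (i).

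For (ii), given $v\in V$ let $L_v:=\mathrm{span}(Z(v))$. A tautology shows $F\cap L_v=Z(v)$, because any $\alpha\in F$ that is a linear combination of elements of $Z(v)$ automatically annihilates $v$. Unpacking \eqref{CL} one then sees that $w\in C_{L_v}$ iff $Z(w)=Z(v)$, so the $\sim_F$-class of $v$ equals $C_{L_v}$. Conversely, for any subspace $L\subseteq V^*$ the set $C_L$ (nonempty by (i)) depends only on $F\cap L$ and $F\setminus L$, and it coincides with the $\sim_F$-class of any of its elements, so every $C_L$ is a cell; this gives (ii). The whole argument offers no real obstacle: the only nontrivial input is the annihilator lemma above, and I do not foresee any complication beyond bookkeeping the relation between $Z(v)$, $F\cap L$, and $\mathrm{span}(F\cap L)$.
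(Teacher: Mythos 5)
Your proof is correct and follows essentially the same route as the paper's: the key step in both is that each $\beta\in F\setminus L$ restricts to a nonzero functional on the subspace \eqref{closure} (you phrase this via the annihilator identity, the paper via a rank comparison with $\mathrm{span}(F\cap L)$, which is the same fact), after which $C_L$ is the complement of finitely many proper hyperplanes in that subspace. Your bookkeeping with $Z(v)$ for parts (ii) and (iv) is only a cosmetic repackaging of the paper's argument.
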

\begin{proof} (i) In view of \eqref{CL}, without changing $C_L$,
we may (and shall) assume that $L$ is the linear span of $L\cap F$.
If $L=V^*$, then $C_L=0$.\;Let $L\neq V^*$; then  the dimension of the
linear subspace \eqref{closure} is positive.\;The restriction of every $\beta\in F\setminus L$
to it is nonzero.\;For, otherwise, ${\rm rk}\big((F\cap L)\cup \beta\big)={\rm rk} (F\cap L)$, whence $\beta$ lies in the linear span of
$F\cap L$, i.e., in $L$,---a contradiction.\;Thus the locus of zeros of the restriction of $\beta$ to the linear subspace \eqref{closure} is its proper linear subspace.\;Whence $C_L$ is nonempty.

(ii) It follows directly from the definitions of $\sim_F$ and $C_L$ that $C_L$ is a cell of $\sim_F$.\;Con\-ver\-sely,
let $C$ be a cell of $\sim_F$ in $V$.\;If $L$  is the linear span of $\{\alpha\in F\mid \alpha|_C=0\}$, then Definition \ref{ccc} implies that every $\beta\in F\setminus L$
vanishes nowhere on $C$.\;Hence $C\subseteq C_L$.\;Since, by (i), $C_L$ is a cell,
this yields $C=C_L$.

(iii) This follows from (i).

(iv) In view of (i), this follows from the finiteness of $F$.
\end{proof}

\begin{corollary}\label{cellvariety}
Every cell of $\sim_F$ in $V$ is an  irreducible smooth rational affine algebraic variety
locally closed in $V$.
\end{corollary}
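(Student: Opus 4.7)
The plan is to extract each of the asserted properties — irreducibility, smoothness, rationality, affineness, and being locally closed in $V$ — directly from the structural description of cells furnished by Proposition \ref{cells}(ii) and (iii); essentially no new work is required. First I would invoke part (ii) to reduce to the case of a cell of the form $C_L$ for some linear subspace $L$ of $V^*$.

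Then, by part (iii), the closure $\overline{C_L}$ is a linear subspace of $V$, and $C_L$ is the complement in $\overline{C_L}$ of the finite union of hyperplane sections $\{v\in\overline{C_L}\mid \beta(v)=0\}$, $\beta\in F\setminus L$. This shows at once that $C_L$ is open in $\overline{C_L}$, hence locally closed in $V$. Since $\overline{C_L}$ is isomorphic to an affine space $\mathbf{A}^n$, it is irreducible, smooth, and rational, and each of these three properties passes to the nonempty open subvariety $C_L$ (nonemptiness having been guaranteed by Proposition \ref{cells}(i)).

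To finish, I would observe that $C_L$ is the principal open subset of the affine variety $\overline{C_L}$ defined by the non-vanishing of the single regular function $\prod_{\beta\in F\setminus L}\beta|_{\overline{C_L}}$; being a distinguished open subvariety of an affine variety, it is itself affine. Since the content of the corollary is packaged into Proposition \ref{cells}, I do not expect any genuine obstacle; the only mildly non-automatic point is affineness, and that is handled by the principal-open observation above.
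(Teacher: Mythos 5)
Your argument is correct and is exactly the intended derivation: the paper states the corollary without proof as an immediate consequence of Proposition \ref{cells}(i)--(iii), and your reading off of irreducibility, smoothness, rationality and local closedness from the description of $\overline{C_L}$ as a linear subspace with $C_L$ the complement of finitely many hyperplanes is the same route. The principal-open observation correctly supplies affineness, the one property not entirely automatic from openness alone.
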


\subsection{}
We now fix a maximal torus $\t$ of $\g$ such that $\t_1$ is a Cartan subspace of $\g_1$
(since the minimal algebraic subalgebra of  a Cartan subspace is a torus, such a $\t$  exists). Let $R\subset \t^*$ be the root system of $\g$ with respect to $\t$.\;As usual, if $\alpha\in R$, then $\g_\alpha:=\{x\in \g\mid [t, x]=\alpha(t)x\;\mbox{for every $t\in \t$}\}$.

We consider the cells of $\sim_R$ in $\t$.\;
\begin{proposition}\label{cellint} Let $\c$ be a cell of $\sim_R$ in $\t$ and let
\begin{equation*}
R_\c:=\{\alpha\in R\mid \alpha|_\c=0\}.
\end{equation*}
\begin{enumerate}[\hskip 3.2mm\rm (i)]
\item For every nonempty subset $\s\subseteq \c$, the following hold:
\begin{enumerate}
\item[$\rm(i_1)$] $\g^\s$ is a reductive subalgebra of $\,\g$ with the maximal torus $\t$ and the $\t$-root decomposition
    \begin{equation}\label{gs}
    \g^\s=\t\oplus \textstyle\big(\bigoplus_{\alpha\in R^{\ }_{\tiny\mbox{$\c$}}}\g_\alpha\big).
    \end{equation}
\item[$\rm(i_2)$] the center $\z(\g^\s)$ of $\g^\s$ is ${\overline \c}$.
\item[$\rm(i_3)$] the commutator ideal $[\g^\s, \g^\s]$ of $\g^\s$ is ${\overline\c}^\perp\oplus \textstyle\big(\bigoplus_{\alpha\in R^{\ }_{\tiny\mbox{$\c$}}}\g_\alpha\big)$, where ${\overline\c}^\perp$ is the orthogonal complement of $\,\overline\c$ in $\t$ with respect to $\langle\cdot\,{,}\,\cdot\rangle|_\t$\,.
\end{enumerate}
\item If $\,t\in \g$ is a semisimple element such that $\g^t=\g^{\c}$, then
$t\in \c$.
\end{enumerate}
\end{proposition}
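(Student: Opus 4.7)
The strategy is to reduce everything to the $\t$-root decomposition of $\g$ and apply the standard structure theory of centralizers of commuting semisimple elements. The key preliminary observation is a reformulation of the cell condition: applying Proposition~\ref{cells} with $V=\t$, $F=R$ shows that for every $\alpha\in R$ and every $s\in\c$ one has $\alpha(s)=0$ if and only if $\alpha\in R_\c$ (otherwise $\alpha$ is nowhere zero on $\c$). Moreover Proposition~\ref{cells}(iii) identifies $\overline{\c}$ with the common kernel $\{h\in\t\mid \alpha(h)=0\text{ for all }\alpha\in R_\c\}$, and hence, via the duality induced by $\langle\cdot\,{,}\,\cdot\rangle|_\t$, identifies $\overline{\c}^\perp$ with $\mathrm{span}\{H_\alpha:\alpha\in R_\c\}$, where $H_\alpha\in\t$ is characterized by $\langle H_\alpha,h\rangle=\alpha(h)$ for all $h\in\t$.

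For $(\mathrm{i}_1)$, I would note that $\s\subseteq\c\subseteq\t$ consists of commuting semisimple elements, so $\g^\s$ is reductive and contains $\t$; since $\t$ is a maximal torus of $\g$, it is also a maximal torus of $\g^\s$. Consequently $\g^\s$ inherits the $\t$-root decomposition, and writing $x=h+\sum_\alpha x_\alpha$ with $h\in\t$ and $x_\alpha\in\g_\alpha$, the condition $[s,x]=0$ for every $s\in\s$ amounts to $\alpha(s)x_\alpha=0$ for all $\alpha$ and $s$, which by the cell reformulation picks out precisely the roots in $R_\c$; this yields \eqref{gs}.

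For $(\mathrm{i}_2)$ and $(\mathrm{i}_3)$, I would exploit \eqref{gs}. A central element of $\g^\s$ must commute with $\t$, which forces all its root-space components to vanish and puts it in $\t$; it is then central if and only if $\alpha(h)=0$ for every $\alpha\in R_\c$, i.e.\ $h\in\overline{\c}$, proving $(\mathrm{i}_2)$. For $(\mathrm{i}_3)$, reductivity gives $\g^\s=\z(\g^\s)\oplus[\g^\s,\g^\s]$, so it suffices to compute $[\g^\s,\g^\s]\cap\t$; this equals $\mathrm{span}\{[\g_\alpha,\g_{-\alpha}]:\alpha\in R_\c\}=\mathrm{span}\{H_\alpha:\alpha\in R_\c\}$, which by the preliminary observation is exactly $\overline{\c}^\perp$.

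For (ii), from $\t\subseteq\g^\c=\g^t$ the semisimple element $t$ commutes with $\t$, so $t\in\g^\t=\t$ (as the maximal torus $\t$ is self-centralizing). Then $t$ lies in some cell $\c'$ of $\sim_R$ in $\t$, and applying $(\mathrm{i}_1)$ to both $\c$ and $\{t\}$ presents the common algebra $\g^\c=\g^t$ with index sets $R_\c$ and $R_{\c'}$, forcing $R_\c=R_{\c'}$; thus $t$ realises exactly the vanishing/nonvanishing pattern that defines $\c$, whence $t\in\c$. The only nontrivial ingredients are the reductivity of the centralizer of a set of commuting semisimple elements (together with the fact that $\t$ remains maximal inside it) and the coroot--invariant-form duality used in $(\mathrm{i}_3)$; I expect no serious obstacle, the most delicate bookkeeping being the correct identification of $\overline{\c}^\perp$ in $(\mathrm{i}_3)$.
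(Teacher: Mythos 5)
Your proposal is correct and follows essentially the same route as the paper: reduce everything to the $\t$-root decomposition, use the cell dictionary of Proposition~\ref{cells} to see that $\alpha(c)=0$ for $c\in\c$ exactly when $\alpha\in R_\c$, and read off the center and commutator ideal from \eqref{gs}. The only cosmetic differences are that you spell out the coroot computation for $(\mathrm{i}_3)$ where the paper just cites \eqref{gs} and $(\mathrm{i}_2)$, and in (ii) you compare the sets $R_\c=R_{\c'}$ rather than the centers $\overline{\c}=\overline{\c'}$ before invoking Proposition~\ref{cells} to conclude $\c'=\c$; both are sound.
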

\begin{proof} $\rm(i_1)$ Since the elements of $\s$ are semisimple and pairwise commute, the Lie algebra $\g^\s$ is reductive.\;As
$\g^\s=\bigcap_{c\in\s}\g^c$,
to prove \eqref{gs} it suffices to show that   \eqref{gs} holds for $\s$ being a single element $c\in \c$.\;Let $x\in \g$.\;The root decomposition of $\g$ with respect to $\t$,
\begin{equation}\label{rootdecg}
\g=\t\oplus \textstyle\big(\bigoplus_{\alpha\in R}\g_\alpha\big),
\end{equation}
yields $x=t+\sum_{\alpha\in R}x_\alpha$ for some
$t\in \t$, $x_\alpha\in \g_\alpha$.\;Hence $[c,x]=\sum_{\alpha\in R}\alpha(c)x_\alpha$.
Therefore, $x\in \g^c$ if and only if $\alpha(c)=0$ for every $\alpha$ such that $x_\alpha\neq 0$.\;The condition $c\in \c$ implies that $\alpha(c)=0$ if and only if $\alpha\in R_\c$.\;Whence
$x\in \g^c$ if and only if $x_\alpha=0$ for all $\alpha\notin R_\c$.\;This and \eqref{rootdecg} prove (i).

$\rm(i_2)$ Since the center of any reductive Lie algebra lies in every maximal torus of the latter,
$\rm(i_1)$ and Proposition \ref{cells} entail
\begin{align*}\z(\g^\s)&=\{t\in \t\mid \mbox{$[t, \g_\alpha]=\alpha(t)\g_\alpha=0$ for each $\alpha\in R_\c$}\}\\
&=\{t\in \t\mid \mbox{$\alpha(t)=0$ for each $\alpha\in R_\c$}\}=\overline\c.
\end{align*}

$\rm(i_3)$ This follows from \eqref{gs} and $\rm(i_2)$.

(ii) As $\t$ lies in $\g^\c$ by ${\rm (i_1)}$, the equality $\g^t=\g^\c$ entails that $t$ commutes with $\t$.\;As $t$ is semisimple and $\t$ is  a maximal torus,  this shows that $t\in \t$.\;Let $\c'$ be the unique cell in $\t$ containing $t$.\;From ${\rm (i_2)}$ we obtain $\overline{\c'}=\z(\g^t)=\z(\g^\c)=\overline{\c}$.\;By Proposition \ref{cells}(ii),(iii), this
yileds $\c'=\c$.
\end{proof}

\begin{corollary}\label{indep} Every two nonempty subsets of any cell of $\sim_R$ in $\t$
have the same centralizers in $\g$.
\end{corollary}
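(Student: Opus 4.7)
The plan is to deduce this directly from Proposition \ref{cellint}$\rm(i_1)$, since the right-hand side of the decomposition \eqref{gs} depends only on the cell $\c$ and not on the particular subset $\s \subseteq \c$ chosen.

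Concretely, let $\s_1$ and $\s_2$ be two nonempty subsets of a cell $\c$ of $\sim_R$ in $\t$. Both $\s_1$ and $\s_2$ satisfy the hypothesis of Proposition \ref{cellint}$\rm(i_1)$, so
\[
\g^{\s_1}=\t\oplus \Big(\!\bigoplus_{\alpha\in R^{\ }_{\tiny\mbox{$\c$}}}\g_\alpha\Big)=\g^{\s_2},
\]
which is the desired conclusion. I would include one sentence pointing out that the root subset $R_\c = \{\alpha \in R \mid \alpha|_\c = 0\}$ is an invariant of the cell (rather than of the subset $\s$), which is the underlying reason the common value on the right-hand side of \eqref{gs} makes sense.

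There is no real obstacle here: the content has already been absorbed into Proposition \ref{cellint}, and the corollary only repackages the invariance that was implicit in the proof of $\rm(i_1)$. The only thing to verify is that the statement of $\rm(i_1)$ really does apply uniformly to every nonempty $\s\subseteq \c$, which it does by the way the proposition is stated.
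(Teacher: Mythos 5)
Your proof is correct and is exactly the intended argument: the paper states the corollary immediately after Proposition \ref{cellint} with no separate proof, precisely because the decomposition \eqref{gs} in $\rm(i_1)$ depends only on $R_\c$, hence only on the cell, so $\g^{\s_1}=\g^{\s_2}$ for any two nonempty subsets $\s_1,\s_2\subseteq\c$.
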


\subsection{} The decomposition $\t=\bigsqcup_{\c\,\in {\mathscr C}_R(\t\,)}\c$ yields the decomposition
\begin{equation}\label{dect}
\t_1=\textstyle\bigsqcup_{\c\,\in {\mathscr C}_R(\t\,)}\c_1.
\end{equation}
\begin{definition}\label{packet} For every cell $\c\!\in\!  {\mathscr C}_R(\t)$ such that $\c_1\neq\varnothing$
and every nilpotent element
$n\in \g^\c_1$,
the set $G_0\cd (\c_1+n)$
is called a {\it packet} in $\g_1$.
\end{definition}

\begin{remark} {\rm  By \cite[Prop.\;39.1.5]{TY05} (cf.\;also Proposition \ref{disj} below), for $m=1$ in \eqref{gradC}, Definition \ref{packet} is equivalent to the usual definition  of a packet (also known as Jordan/decomposition class) in $\g$;
see  \cite[39.1.3]{TY05} and the references in \cite{P08}.
}
\end{remark}

\subsection{} In this subsection we prove some basic properties of packets in $\g$. Below, for every locally closed subset $X$ in $\g_1$, the notation $X^{\rm reg}$ refers to the action of  $G_0$ on $\g_1$ if otherwise is not stated.

\begin{proposition}\label{cover} The union of all packets in $\g_1$ coincides with $\g_1$.
\end{proposition}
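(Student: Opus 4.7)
The plan is to use the $\theta$-equivariant Jordan decomposition together with Vinberg's conjugacy theorem for Cartan subspaces. Given an arbitrary $x\in\g_1$, write its Jordan decomposition $x=x_{\rm s}+x_{\rm n}$; by the facts recalled at the start of Section \ref{secpac}, both summands automatically lie in $\g_1$. So the task is to exhibit a cell $\c\in\mathscr C_R(\t)$ with $\c_1\neq\varnothing$, a nilpotent $n\in\g^\c_1$, and a $g\in G_0$ such that $g\cd x\in \c_1+n$.

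First I would reduce to the case $x_{\rm s}\in\t_1$. Since $x_{\rm s}$ is a semisimple element of $\g_1$, the minimal algebraic subalgebra of $\g_1$ containing it is a torus consisting of pairwise commuting semisimple elements, so it is contained in some Cartan subspace of $\g_1$. By Vinberg's theorem (\cite[Thm.\;1]{V76}, already invoked in Subsection \ref{thetagroups}), this Cartan subspace is $G_0$-conjugate to $\t_1$. Hence after replacing $x$ by $g\cd x$ for a suitable $g\in G_0$ we may assume $x_{\rm s}\in\t_1$. Packets are $G_0$-stable by construction, so this replacement is harmless.

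Next, the decomposition \eqref{dect} places $x_{\rm s}$ inside a unique cell $\c_1$ for some $\c\in\mathscr C_R(\t)$, and $\c_1$ is nonempty because it contains $x_{\rm s}$. It remains to verify that the nilpotent part $x_{\rm n}$ lies in $\g^\c_1$. Since $x_{\rm n}$ commutes with $x_{\rm s}$, it belongs to $\g^{x_{\rm s}}$; by Corollary \ref{indep} applied to the subsets $\{x_{\rm s}\}\subseteq\c$ of the cell $\c$, we have $\g^{x_{\rm s}}=\g^{\c}$. Combined with $x_{\rm n}\in\g_1$ this gives $x_{\rm n}\in\g^\c\cap\g_1=\g^\c_1$. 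Setting $n:=x_{\rm n}$ we conclude $x=x_{\rm s}+x_{\rm n}\in\c_1+n\subseteq G_0\cd(\c_1+n)$, so $x$ lies in a packet.

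The only nontrivial input is the reduction step: that any semisimple element of $\g_1$ is $G_0$-conjugate to an element of the chosen Cartan subspace $\t_1$. This is where Vinberg's conjugacy theorem is essential; everything else is formal and reduces to the cell description of centralizers of semisimple elements furnished by Proposition \ref{cellint} and Corollary \ref{indep}.
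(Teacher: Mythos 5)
Your proof is correct and follows essentially the same route as the paper's: reduce to $x_{\rm s}\in\t_1$ via the conjugacy of Cartan subspaces (the paper cites this directly as \cite[Cor.\;of Thm.\;1]{V76}, asserting that the $G_0$-orbit of a semisimple element of $\g_1$ meets $\t_1$), locate $x_{\rm s}$ in a cell via \eqref{dect}, and use Corollary \ref{indep} to see that $x_{\rm n}\in\g^{x_{\rm s}}_1=\g^{\c}_1$. The only difference is that you unpack the conjugacy step slightly more than the paper does; the argument is otherwise identical.
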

\begin{proof} Let $x\in \g_1$; we should show that $x$ lies in a packet.\;We have $x_{\rm s}, x_{\rm n}\in \g_1$.\;Defi\-ni\-tion  \ref{packet} shows that packets in $\g_1$ are $G_0$-stable.\;On the other hand, since $\t_1$ is a Cartan subspace in $\g_1$ (by the choice of $\t$),  the $G_0$-orbit of $x_{\rm s}$ intersects $\t_1$, see
 \cite[Cor.\;of Thm.\;1]{V76}.\;Hence when proving that $x$ lies in a packet, we can (and shall) assume that $x_{\rm s}\in \t_1$.\;In view of  \eqref{dect}, there exists a cell $\c\in {\mathscr C}_R(\t)$ such that
$x_{\rm s}\in \c_1$.\;By the definition of the Jordan decomposition of $x$, we have $x_{\rm n}\in \g_1^{x_{\rm s}}$.\;In view of Corollary \ref{indep}, we have $\g_1^{x_{\rm s}}=\g_1^\c$.\;Hence, by
Definition \ref{packet}, $G_0\cd (\c_1+x_{\rm n})$ is a packet in $\g_1$; clearly
$x$ lies in it.
\end{proof}

\begin{proposition}\label{pf}
There are only finitely many packets in $\g_1$.
\end{proposition}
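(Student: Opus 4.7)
The plan is to parameterize packets by finite data.\;By Definition \ref{packet}, every packet has the form $G_0 \cdot (\c_1 + n)$ where $\c \in \mathscr{C}_R(\t)$ satisfies $\c_1 \neq \varnothing$ and $n$ is nilpotent in $\g_1^\c$.\;Since $\mathscr{C}_R(\t)$ is finite by Proposition \ref{cells}(iv), it suffices to fix such a cell $\c$ and bound the number of packets arising with this particular $\c$.

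The first step will be to equip $L := \g^\c$ with a $\mathbf{Z}_m$-grading.\;Choose $c \in \c_1$; since $c \in \g_1$ is a $\theta$-weight vector, every $\tau \in \theta$ satisfies $\tau(c) = \lambda c$ for some scalar $\lambda \neq 0$, so $\g^c$ is $\tau$-stable, hence $\theta$-stable.\;By Corollary \ref{indep}, $\g^c = \g^\c = L$, so $L$ is $\theta$-stable and acquires the induced grading $L_i = \g_i^\c$.\;By Proposition \ref{cellint}, $L$ is reductive, its center $\z(L) = \overline{\c}$ lies in $\t$ (hence consists of semisimple elements), and its commutator ideal $L' := [L, L]$ is semisimple; both subalgebras inherit the $\mathbf{Z}_m$-grading.\;In particular any nilpotent $n \in L_1$ automatically belongs to $L'_1$.

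To finish, I apply the cited Vinberg finiteness theorem (valid for any $\mathbf{Z}_m$-graded semisimple Lie algebra) to $L'$: there are finitely many nilpotent orbits in $L'_1$ under the connected algebraic subgroup with Lie algebra $L'_0$.\;Since $\z(L)_0$ acts trivially on $L$, the same finiteness persists when this subgroup is enlarged to $H \subseteq G_0$, the connected subgroup with ${\rm Lie}\,H = L_0 = \g_0^\c$.\;Moreover every $h \in H$ centralizes $\c$ pointwise (its Lie algebra does), so $h \cdot (\c_1 + n) = \c_1 + h n$ and hence $G_0 \cdot (\c_1 + n) = G_0 \cdot (\c_1 + h n)$.\;Consequently the map sending an $H$-orbit of a nilpotent in $L_1$ to the corresponding packet is surjective onto the packets with cell $\c$, and there are only finitely many such packets.\;Combined with the finiteness of cells, this proves the proposition.

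The main obstacle is justifying that $L = \g^\c$ is $\theta$-stable, which would fail for arbitrary cells but is rescued here by the hypothesis $\c_1 \neq \varnothing$ together with Corollary \ref{indep}.\;A minor secondary point is invoking Vinberg's theorem for the graded semisimple subalgebra $L'$ rather than the ambient $\g$, which is just the same statement applied to a different input.
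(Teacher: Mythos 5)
Your proof is correct and follows essentially the same route as the paper: reduce to a fixed cell $\c$ via the finiteness of ${\mathscr C}_R(\t)$, observe that $\g^{\c}$ is a graded reductive subalgebra, invoke Vinberg's finiteness of nilpotent orbits in its degree-one piece under the connected subgroup with Lie algebra $\g^{\c}_0$, and use that this subgroup fixes $\c_1$ pointwise to conclude that nilpotents in the same orbit yield the same packet. Your explicit passage through the semisimple commutator ideal $[\g^{\c},\g^{\c}]$ before applying Vinberg's theorem is a minor technical refinement of the same argument, not a different approach.
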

\begin{proof} Since the set ${\mathscr C}_R(\t)$ is a finite by Proposition \ref{cells}(iv),
it suffices to show that, for any given cell $\c\in {\mathscr C}_R(\t)$ with $\c_1\neq \varnothing$, there are only finitely many packets of the form $G_0\cd (\c_1+n)$ where $n$ is a nilpotent element of  $\g^\c_1$.

To this end, consider the Lie subalgebra $\g^{\hskip -.1mm \c_1}$ of $\g$.\;By Proposition \ref{cellint}(i), it is reductive.\;As $\g_1$ is a weight space of $\theta$, the inclusion
$\c_1\subseteq \g_1$ entails that $\g^{\hskip -.1mm\c_1}$ is $\theta$-stable, i.e.,  $\g^{\hskip -.1mm \c_1}$ is a graded
subalgebra of the graded Lie algebra $\g$.\;In views of Corollary \ref{indep}, we have
$\g^{\hskip -.1mm \c_1}=\g^{\hskip -.1mm\c}$.

Let $S$ be a closed connected subgroup of $G_0$ such that
\begin{equation}\label{fixx}
{\rm Lie}\,S=\g^{\hskip -.1mm\c}_0.
\end{equation}
By \cite[Prop.\;2]{V76}, there are only finitely many
orbits of the adjoint action of $S$ on the variety of nilpotent elements of  $\g^{\hskip -.1mm\c}_1$.\;Let $n_1,\ldots, n_d$ be the representatives of these orbits.

Now consider a packet $G_0\cd (\c_1+n)$, where $n$ is a nilpotent element of
$\g^{\hskip -.1mm\c}_1$.\;By the aforesaid, $S\cd n=S\cd n_i$ for some $i$.\;Since, in view of \eqref{fixx}, every element of $\c_1$ is a fixed point of $S$, this implies that \begin{equation}\label{S}
S\cd (\c_1+n)=S\cd (\c_1+n_i).
\end{equation}
In turn, since $S$ is a
subgroup of $G_0$, from Definition \ref{packet} and \eqref{S} we obtain
$$G_0\cd (\c_1+n)=G_0S\cd (\c_1+n)\overset{\eqref{S}}{=\hskip -.8mm=}G_0S\cd (\c_1+n_i)=G_0\cd (\c_1+n_i).$$ This proves the required finiteness statement.
\end{proof}

\begin{proposition}\label{disj} For any $x, y\in \g_1$ the following properties are equivalent:
\begin{enumerate}[\hskip 3.2mm\rm(i)]

\item[\rm(P)] There exists a packet $G_0\cd (\c_1+n)$ in $\g_1$
containing both
$x$ and $y$.
\item[\rm(J)] There exists an element $g\in G_0$ such that
\begin{equation}\label{JJJ}
g\cd \g^{x_{\rm s}}=\g^{g\cdot x_{\rm s}}=\g^{y_{\rm s}}\quad\mbox{and}\quad
g\cd x_{\rm n}=y_{\rm n}.
\end{equation}
\end{enumerate}
\end{proposition}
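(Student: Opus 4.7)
My plan is to establish the equivalence by proving both implications directly, relying on three ingredients in tandem: $G_0$-equivariance of the Jordan decomposition, Corollary \ref{indep} (which equates the centralizers of all nonempty subsets of a common cell), and Proposition \ref{cellint}(ii) (which recovers a cell from its centralizer in $\g$).

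For the implication (P)$\Rightarrow$(J), I would write $x = g_1 \cdot (c_1 + n)$ and $y = g_2 \cdot (c_2 + n)$ for some $c_1, c_2 \in \c_1$, a common nilpotent $n \in \g^{\c}_1$, and $g_1, g_2 \in G_0$. Because $n \in \g^{\c} \subseteq \g^{c_i}$, the semisimple and nilpotent parts of $c_i + n$ are $c_i$ and $n$; $G_0$-equivariance of the Jordan decomposition then gives $x_{\rm s} = g_1 \cdot c_1$, $x_{\rm n} = g_1 \cdot n$, and similarly for $y$. Setting $g := g_2 g_1^{-1}$, the identity $g \cdot x_{\rm n} = y_{\rm n}$ is immediate, and
\[
\g^{g \cdot x_{\rm s}} = g_2 \cdot \g^{c_1} = g_2 \cdot \g^{\c} = \g^{y_{\rm s}},
\]
where the middle equalities apply Corollary \ref{indep} to $\{c_1\}, \{c_2\} \subseteq \c$.

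For (J)$\Rightarrow$(P), the strategy is to bring $y_{\rm s}$ and $g \cdot x_{\rm s}$ simultaneously into a common cell of $\t_1$. Choose $h \in G_0$ with $h \cdot y_{\rm s} \in \t_1$ --- this is possible because $\t_1$ is a Cartan subspace of $\g_1$ and meets every $G_0$-orbit of semisimple elements of $\g_1$ --- and let $\c$ be the cell of $\sim_R$ containing $h \cdot y_{\rm s}$. Then $\g^{h \cdot y_{\rm s}} = \g^{\c}$ by Corollary \ref{indep}, and
\[
\g^{hg \cdot x_{\rm s}} = h \cdot \g^{g \cdot x_{\rm s}} = h \cdot \g^{y_{\rm s}} = \g^{h \cdot y_{\rm s}} = \g^{\c}.
\]
Proposition \ref{cellint}(ii) therefore forces $hg \cdot x_{\rm s} \in \c$, and since $hg \cdot x_{\rm s} \in \g_1$ it in fact lies in $\c_1$. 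Setting $n := h \cdot y_{\rm n} = hg \cdot x_{\rm n}$, this $n$ is nilpotent and lies in $h \cdot \g^{y_{\rm s}}_1 = \g^{h \cdot y_{\rm s}}_1 = \g^{\c}_1$, because $y_{\rm n} \in \g^{y_{\rm s}}_1$. Hence both $hg \cdot x = (hg \cdot x_{\rm s}) + n$ and $h \cdot y = (h \cdot y_{\rm s}) + n$ belong to the translate $\c_1 + n$, and so each lies in the packet $G_0 \cdot (\c_1 + n)$; the $G_0$-stability of this packet then puts $x$ and $y$ both inside it.

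The main obstacle I anticipate is the (J)$\Rightarrow$(P) direction, precisely because (J) gives only the coincidence of the centralizers $\g^{g \cdot x_{\rm s}} = \g^{y_{\rm s}}$, not of the semisimple elements themselves. Without Proposition \ref{cellint}(ii) there is no route from a centralizer identity back to cell membership; the device of first pulling $y_{\rm s}$ into $\t_1$ and only then analyzing $g \cdot x_{\rm s}$ is what allows a single cell $\c$ to accommodate both semisimple parts. The remaining work is routine bookkeeping of the Jordan decomposition under the $G_0$-action.
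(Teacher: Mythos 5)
Your proposal is correct and takes essentially the same approach as the paper: the (P)$\Rightarrow$(J) direction is identical, and in (J)$\Rightarrow$(P) both arguments reduce to placing the (conjugated) semisimple parts in a common cell of $\t_1$ and matching the nilpotent parts. The only cosmetic difference is that you invoke Proposition \ref{cellint}(ii) directly where the paper re-derives its content inline (via $\z(\g^{w\cdot x_{\rm s}})=\overline{\c'}=\overline{\c}$), and you build the packet by conjugating $y_{\rm s}$ into $\t_1$ rather than citing Proposition \ref{cover}.
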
\label{equi}
\begin{proof} (P)$\Rightarrow$(J) Let $x, y\in G_0\cd (\c_1+n)$.
By Definition \ref{packet}, there are $c_1, c_2\in \c_1$ and $g_1, g_2\in G_0$ such that $g_1\cd x=c_1+n$, $g_2\cd y=c_2+n$. Hence
\begin{equation}\label{sn}
\begin{split}
g_1\cd x_{\rm s}&=(g_1\cd x)_{\rm s}=c_1,\quad g_1\cd x_{\rm n}=(g_1\cd x)_{\rm n}=n,\\
g_2\cd y_{\rm s}&=(g_2\cd y)_{\rm s}=c_2,\quad\hskip .4mm g_2\cd y_{\rm n}=(g_2\cd y)_{\rm n}=n,
\end{split}
\end{equation}
From \eqref{sn} and Corollary \ref{indep}  we obtain $\g^{g_1\cdot x_{\rm s}}=\g^{g_2\cdot y_{\rm s}}$ and $g_1\cd x_{\rm n}=g_2\cd y_{\rm n}$; whence \eqref{JJJ} with $g=g_2^{-1}g_1$.

(J)$\Rightarrow$(P) Assume that \eqref{JJJ} holds for $x, y\in \g_1$.\;By Proposition \ref{cover}, there is a packet $G_0\cd (\c_1+n)$
in $\g_1$ containing $y$; we have to show that $x\in G_0\cd (\c_1+n)$.\;
By Definition \ref{packet}, there are $c\in \c_1$ and $h\in G_0$ such that $y=h\cd (c+n)$; whence
$y_{\rm s}=h\cd c$, $y_{\rm n}=h\cd n$.\;Plugging this in \eqref{JJJ}, we obtain
\begin{equation}\label{interm}
\g^{w\cdot x_{\rm s}}=\g^c,\;\; w\cd x_{\rm n}=n,\quad \mbox{where $w=h^{-1}g$}.
\end{equation}

By Proposition \ref{cellint}$\rm(i_1)$, we have $\t\subset \g^c$.\;Hence, by the first equality in \eqref{interm}, the semisimple element $w(x_{\rm s})$ centralizes the maximal torus $\t$.\;Whence $w\cd x_{\rm s}\in \t$ and, therefore, there is a unique cell $\c'$ in $\t$ such that \begin{equation}\label{c'}
w\cd x_{\rm s}\in \c'.
\end{equation}

From Proposition \ref{cellint}$\rm(i_2)$  we then infer that $\z(\g^{w\cdot  x_{\rm s}})=\overline{\c'}$. On the other hand, by the same reason,
$\z(\g^c)=\overline{\c}$.\;Combining this with the first equality in \eqref{interm}, we conclude  that $\overline{\c'}=\overline{\c}$.\;By Proposition \ref{cells}(iii), this yields
\begin{equation}\label{c'c}
\c'=\c.
\end{equation}

From \eqref{c'}, \eqref{c'c}, the second equality in \eqref{interm}, and Definition \ref{packet} we now conclude that $x\in G_0\cd (\c_1+n)$.
 \end{proof}

\begin{corollary} The packets in $\g_1$ do not depend on the choice of  a torus $\,\t$.
\end{corollary}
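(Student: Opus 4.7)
The strategy is to deduce the corollary from Proposition \ref{disj}, in which the ``lie in a common packet'' relation is characterized by condition (J)---a condition that mentions neither $\t$ nor any object derived from it. Everything will reduce to re-reading that proposition intrinsically.

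First I would check that condition (J) defines an equivalence relation $\sim_J$ on $\g_1$: reflexivity via $g=e$; symmetry by replacing $g$ with $g^{-1}$; transitivity by composing the two elements of $G_0$ produced by successive applications of (J). This step is routine.

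Next, for any torus $\t$ satisfying the standing hypothesis that $\t_1$ is a Cartan subspace of $\g_1$, I would show that the packets defined with respect to $\t$ are precisely the $\sim_J$-equivalence classes. Each packet lies inside a single $\sim_J$-class by the implication (P)$\Rightarrow$(J) of Proposition \ref{disj}. Conversely, suppose $P$ is a packet containing $y$ and $x\sim_J y$; then inspection of the proof of (J)$\Rightarrow$(P) in Proposition \ref{disj} shows that the argument there starts from an \emph{arbitrary} packet containing $y$ and places $x$ in \emph{that same} packet. Hence $P$ is exactly the $\sim_J$-class of $y$. Combined with Proposition \ref{cover}, this proves that the packets form the partition of $\g_1$ into $\sim_J$-classes (in particular, that they are pairwise disjoint).

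Finally, since condition (J) refers only to $G_0$, the intrinsic Jordan components $x_{\rm s},x_{\rm n},y_{\rm s},y_{\rm n}$, and centralizers in $\g$, the relation $\sim_J$ is independent of the choice of $\t$. Hence so is its partition into equivalence classes, and therefore so is the packet decomposition. I do not anticipate any serious obstacle: Proposition \ref{disj} already carries the entire burden, and what remains is essentially bookkeeping to upgrade the equivalence ``(P)$\iff$(J)'' to a statement about partitions.
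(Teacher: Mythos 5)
Your proposal is correct and follows exactly the route the paper intends: the corollary is stated without proof precisely because Proposition \ref{disj} identifies packets with the equivalence classes of condition (J), which mentions only $G_0$, the Jordan decomposition, and centralizers in $\g$, none of which involve $\t$ (the paper makes the ``packets are precisely the (J)-classes'' point explicit in its proof of the disjointness corollary that follows). No gaps.
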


\begin{corollary}\label{disjo}
 Every two packets in $\g_1$
are either equal
or disjoint.
\end{corollary}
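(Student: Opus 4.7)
The strategy is to derive the corollary directly from Proposition \ref{disj}. First, I would verify that condition (J) defines an equivalence relation on $\g_1$: reflexivity follows by taking $g=e$, symmetry by replacing $g$ by $g^{-1}$ (noting that $\g^{g\cd x_{\rm s}}=g\cd \g^{x_{\rm s}}$ because conjugation is a group action), and transitivity by composing two witnesses as $g_2 g_1$ (the corresponding pair of equalities in \eqref{JJJ} multiplies cleanly). Combined with the equivalence (P)$\Leftrightarrow$(J) of Proposition \ref{disj}, this shows that the relation ``$x$ and $y$ lie in a common packet'' is itself an equivalence relation on $\g_1$.

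The step requiring slight care is that being an equivalence relation on the ambient set does not a priori force two overlapping packets to coincide; it would only guarantee that they lie in a common equivalence class, which is weaker than equality. To bridge this gap I would inspect the proof of (J)$\Rightarrow$(P) in Proposition \ref{disj}: the argument given there is in fact localized, starting from an \emph{arbitrary} packet $G_0\cd(\c_1+n)$ that contains $y$ and concluding that $x$ lies in that same specific packet. This strengthening of the stated implication is the real content I would use, and I would make it explicit rather than leaving the reader to re-examine the previous proof.

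With this observation in hand the conclusion is immediate. Suppose packets $P_1$ and $P_2$ in $\g_1$ share an element $x$, and pick any $y\in P_1$. Since $x,y\in P_1$, Proposition \ref{disj}\,(P)$\Rightarrow$(J) yields (J) for the pair $(y,x)$. Applying the localized (J)$\Rightarrow$(P) described above—with $P_2$ playing the role of the given packet containing $x$—we deduce $y\in P_2$. Hence $P_1\subseteq P_2$, and exchanging the roles of $P_1$ and $P_2$ gives $P_1=P_2$. I do not anticipate a computational obstacle; the principal step is the bookkeeping observation that the proof of Proposition \ref{disj} delivers slightly more than its statement, after which the corollary is essentially a one-line deduction.
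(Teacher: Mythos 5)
Your proof is correct and follows essentially the same route as the paper, which disposes of the corollary in one line by observing that property (J) is an equivalence relation on $\g_1$ and that, by Proposition \ref{disj}, packets are precisely its equivalence classes. The extra care you take in extracting the localized form of (J)$\Rightarrow$(P) from the proof of Proposition \ref{disj} --- namely that the argument there places $x$ in \emph{whichever} packet containing $y$ one starts from --- is exactly the step the paper compresses into the phrase ``precisely its equivalence classes,'' and making it explicit is a minor but genuine gain in rigor.
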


\begin{proof} This is because property (J)  is clearly an equivalence relation on $\g_1$ and, by Proposition \ref{disj}, packets in $\g_1$
 are precisely its equi\-va\-lence classes.
\end{proof}

\begin{proposition}\label{propac} For every packet
$G_0\cd (\c_1+n)$ 
and its closure $\overline{G_0\cd(\c_1+n)}$ in\;$\g_1$,
the following hold:
\begin{enumerate}[\hskip 3.2mm\rm(i)]

\item $\g^x_i=\g^\c_i\cap \g_i^n$ for any  $x\in \c_1+n$ and $i$;

\item  $[\g_0, x]=[\g_0, y]$ for any $x, y\in \c_1+n$.

\item  $G_0\cd(\c_1\!+n)$ is irreducible
and contains a dense open subset of
$\overline{G_0\cd(\c_1\!+\!n)}$.

    \item $G_0\cd(\c_1\!+\!n)$ is contained in a sheet of $\g_1$.

\item
$\overline{G_0\cd (\c_1 \!+\! n)}$
is an irreducible algebraic variety of dimension $d+\dim \c_1$,
where $d$ is the dimension of $\,G_0$-orbits in a sheet containing $G_0\cd (\c_1 \!+n)$.
\end{enumerate}
\end{proposition}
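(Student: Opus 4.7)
The plan is to handle (i), (iii), and (iv) first, as they follow rather quickly from machinery already established, then to attack (ii), which I expect to be the main obstacle, and finally to derive (v) via a fiber-dimension count.

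For (i): since $n\in\g^\c\subseteq\g^c$ commutes with the semisimple $c$ and is itself nilpotent, $x=c+n$ is the Jordan decomposition of $x$, so $\g^x=\g^{x_{\rm s}}\cap\g^{x_{\rm n}}=\g^c\cap\g^n=\g^\c\cap\g^n$ by Corollary \ref{indep}; intersecting with $\g_i$ yields the claim. For (iii), $\c_1=\c\cap\g_1$ is, by Proposition \ref{cells}(iii), the complement of finitely many hyperplanes in the linear space $\overline{\c}\cap\g_1$, hence irreducible; therefore $\c_1+n$ is irreducible, and $G_0\cdot(\c_1+n)$, as the image of the morphism $G_0\times(\c_1+n)\to\g_1$, is constructible and irreducible, and any such set contains a dense open subset of its closure. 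For (iv), (i) implies that every orbit through a point of $G_0\cdot(\c_1+n)$ has the same dimension $d:=\dim\g_0-\dim(\g^\c_0\cap\g^n_0)$; combined with the irreducibility from (iii), this forces $G_0\cdot(\c_1+n)$ into a single irreducible component of the locally closed set $\{y\in\g_1\mid\dim G_0\cdot y=d\}$, i.e., into a sheet.

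The hard part will be (ii). Since $[\g_0,c+n]=[\g_0,c]+[\g_0,n]$, it suffices to show that $[\g_0,c]$ is the same subspace for every $c\in\c_1$. I would decompose $y\in\g_0$ by the root-space decomposition $\g=\t\oplus\bigoplus_{\alpha\in R}\g_\alpha$ with respect to the $\theta$-stable Cartan $\t$, giving $[y,c]=-\sum_\alpha\alpha(c)y_\alpha$ for $c\in\t$. The subtle point is that $\theta$ may permute individual root spaces rather than stabilize each $\g_\alpha$, so the constraint $y\in\g_0$ couples the components $y_\alpha$ within each $\theta$-orbit on $R$. Regrouping the sum by $\theta$-orbits, and using the identity $(\theta^k\alpha)(c)=\zeta^{-k}\alpha(c)$ for $c\in\g_1$ (where $\zeta$ is the primitive $m$-th root of unity defining the grading), the contribution of each orbit $O$ with representative $\alpha_O$ factors as $-\alpha_O(c)\cdot e_O(y)$, where $e_O\colon\g_0\to\g_1$ depends only on $O$. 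For $c\in\c_1$, the scalar $\alpha_O(c)$ is non-zero precisely when $O\not\subseteq R_\c$; since non-zero rescaling preserves linear subspaces, one obtains $[\g_0,c]=\bigoplus_{O\not\subseteq R_\c}e_O(\g_0)$, manifestly independent of $c\in\c_1$.

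Finally, for (v), $\overline{G_0\cdot(\c_1+n)}$ is irreducible by (iii), and its dimension equals $\dim G_0\cdot(\c_1+n)$. I would compute the latter via the fiber-dimension theorem applied to the dominant morphism $\phi\colon G_0\times(\c_1+n)\to G_0\cdot(\c_1+n)$, $(g,x)\mapsto g\cdot x$. The fiber over $y=c_0+n$ with $c_0\in\c_1$ consists of those $g\in G_0$ with $g^{-1}n=n$ and $g^{-1}c_0\in\c_1$, i.e., elements of $G_0^n$ carrying $c_0$ into $\c_1$. By \cite[Thm.\;2]{V76}, two elements of the Cartan subspace $\t_1$ lie in the same $G_0$-orbit if and only if they are conjugate under the finite little Weyl group; hence $G_0\cdot c_0\cap\c_1$ is finite, and the fiber is a finite disjoint union of cosets of the stabilizer $G_0^{c_0+n}$, each of dimension $\dim G_0-d$. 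Therefore $\dim G_0\cdot(\c_1+n)=\dim G_0+\dim\c_1-(\dim G_0-d)=d+\dim\c_1$, completing the argument.
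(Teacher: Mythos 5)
The main gap is in your proof of (ii). You open with ``Since $[\g_0,c+n]=[\g_0,c]+[\g_0,n]$, it suffices to show that $[\g_0,c]$ is the same subspace for every $c\in\c_1$.'' But $[\g_0,c+n]$ is the image of the linear map ${\rm ad}(c)|_{\g_0}+{\rm ad}(n)|_{\g_0}$, and the image of a sum of linear maps is in general only \emph{contained} in the sum of the images (consider $A$ and $-A$). So the identity you start from is not formal; it is in fact true here, but proving it is essentially the whole content of (ii): one has to observe that $\g=\g^c\oplus\m$ with $\m:=[\g,c]$ a graded complement on which ${\rm ad}(c+n)$ is invertible, while on $\g^c$ it restricts to ${\rm ad}(n)$, whence $[\g_0,c+n]=\m_1\oplus[\g_0^c,n]=[\g_0,c]+[\g_0,n]$. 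As written, your reduction assumes what needs to be proved, and the subsequent $\theta$-orbit analysis treats only the semisimple part, leaving the interaction with $n$ unaddressed (it also tacitly assumes $\t$ is $\theta$-stable so that $\theta$ permutes the root spaces; this holds for the chosen $\t$ but should be said). The paper's route is much shorter and bypasses all of this: by \cite[Prop.\;5]{V76}, $[\g_0,x]$ is the orthogonal complement in $\g_1$ of $\g_{-1}^x$ under the pairing of $\g_1$ with $\g_{-1}$, and $\g_{-1}^x=\g_{-1}^{\c}\cap\g_{-1}^n$ is constant on $\c_1+n$ by part (i).

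Parts (i), (iii), (iv) are correct and essentially the paper's arguments. Part (v) is correct but takes a genuinely different route: you compute the dimension of the generic fiber of $\varphi\colon G_0\times\c_1\to\overline{G_0\cd(\c_1+n)}$, using the uniqueness of the Jordan decomposition to identify the fiber over $c_0+n$ with $\{g\in G_0^n\mid g^{-1}\cd c_0\in\c_1\}$, and the finiteness of $G_0\cd c_0\cap\t_1$ (an orbit of the finite little Weyl group, by \cite{V76}) to see that every fiber is a finite union of cosets of point stabilizers of dimension $\dim G_0-d$. The paper instead computes the rank of $d\varphi$ at a generic point, which relies on the identity $[\g,x]\cap\z(\g^{x_{\rm s}})=0$ from \cite[Lem.\;39.2.8]{TY05}. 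Your version trades that lemma for the Chevalley-type restriction theorem for $\theta$-groups; both are legitimate, and yours has the advantage of not depending on (ii).
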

\begin{proof}
If $c \in \c_1$ and $x=c+n$, then $c=x_{\rm s}$ and $n=x_{\rm n}$.\;This, the uniqueness of the Jordan decomposition, and Corollary \ref{indep} imply that
\begin{equation}\label{stabi}
 \g^x=\g^c\cap \g^n=\g^\c\cap \g^n.
 \end{equation}
  Since $c, n, x$ are the homogeneous elements, $\g^c$, $\g^n$, $\g^x$ are the
  graded subalgeb\-ras of $\g$.\;Taking the $i$th components, from \eqref{stabi} we obtain
   (i).

   The subspace $\g_{-1}$ is dual to $\g_1$ with respect to $\langle\cdot\,{,}\,\cdot\rangle$, and the orthogonal
complement in $\g_1$ to $\g_{-1}^x$ with respect to
this duality is $[\g_0, x]$; see \cite[Prop.\;5]{V76}.\;Since $\g_{-1}^x$, by (i), is the same for
all $x\in \c_1+n$, this proves (ii).

Corollary \ref{cellvariety} and the connectedness of $G_0$
yield that $G_0\times \c_1$ is an irreducible smooth affine algebraic variety.\;In view of
 Definition \ref{packet},
the packet $G_0\cd (\c_1+n)$ is the image of the 
morphism:
\begin{equation}\label{phi}
\varphi\colon G_0\times \c_1\to \g_1,\quad (g, c)\mapsto g\cd (c+n).
\end{equation}
By the general properties of morphisms,   this implies (iii).

In view of  \eqref{G0}, we have $\dim G_0\cd x=\dim \g_0-\dim \g_0^x$.\;This and (i) show that $\dim G_0\cd x$ is the same for all $x\in G_0\cd (\c_1+n)$, which, by (iii), means that
$G_0\cd (\c_1+n)\subseteq \overline{G_0\cd (\c_1+n)}^{\rm reg}$. This implies (iv).

We may (and shall) consider $\varphi$ as a dominant morphism $G_0\times \c_1\!\to\! \overline{G_0\!\cd\! (\c_1\!+\!n)}$. By \cite[Lem.\;10.5]{H77},
there is a point $z=(g, c)\in G_0\times \c_1$ such that $\varphi(z)$ is a smooth point of
$\overline{G_0\cd (\c_1+n)}$ and $d\varphi_z$ is a surjective map of the tangent spaces; this implies that
$\dim \overline{G_0\cd (\c_1+n)}$ is equal to the dimension of the image of   $d\varphi_z$.\;We shall now compute this latter dimension.

First note that since $\varphi $ is $G_0$-equivariant, we may (and shall) assume that $g$ is the identity element.\;From
Proposition \ref{cells}(ii) we infer that the closure $\overline{\c_1}$ of
$\c_1$ in $\t_1$ is a linear subspace, and $\overline{\c_1}\setminus \c_1$ is a union of finitely many hyperplanes.\;This shows that $\g_0\oplus \overline{\c_1}$ is
the tangent space of $G_0\times \c_1$ at $z$.\;Given this, we deduce from \eqref{G0} and \eqref{phi} that
\begin{equation}\label{ddd}
\mbox{the image of $d\varphi_z$ is $[\g_0, c+n]+\overline{\c_1}$.}
\end{equation}

For the point $x:=c+n\in \c_1+n$, we have $x_{\rm s}=c$, $x_{\rm n}=n$.\;Therefore,
\begin{equation}\label{cap}
[\g, x]\cap \z(\g^{\hskip -.1mm c})=0;
\end{equation}
see \cite[Lem.\;39.2.8]{TY05}.\;But
$\z(\g^{\hskip -.1mm c})=\overline \c$ by Proposition \ref{cellint}$\rm (i_2)$.\;This and \eqref{cap} yield $[\g, x]\cap \overline \c=0$; whence
\begin{equation}\label{caps}
[\g_0, x]\cap \overline {\c_1}=0.
\end{equation}

Combining \eqref{ddd}, \eqref{caps}, (ii), and taking into account that
$[\g_0, x]$ is the tangent space to $G_0\cd x$ at $x$,
 we now obtain that the dimension of the image of $d\varphi_z$ is equal to $\dim [\g_0, x]+\dim \overline {\c_1}=d+\dim \c_1$.\;This completes the proof of (v).
\end{proof}

\begin{corollary}\label{mopa}
${\rm mod}(G_0:\overline{G_0\cd (\c_1+n)})
=\dim\c_1$.
\end{corollary}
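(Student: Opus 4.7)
The plan is to derive the identity directly from Theorem \ref{visact} applied to the closed $G_0$-stable subset $\overline{G_0\cd(\c_1+n)}\subseteq \g_1$, combined with the dimension formula from Proposition \ref{propac}(v).

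First I would invoke visibility. By Theorem \ref{mr}, the adjoint representation $\varrho\colon G_0\to {\rm GL}(\g_1)$ is visible (this is what its proof reduces to via \cite[Thm.\;4]{V76}). Since $\overline{G_0\cd(\c_1+n)}$ is a closed $G_0$-stable subset of $\g_1$, Theorem \ref{visact}(iii) gives that the induced action of $G_0$ on $\overline{G_0\cd(\c_1+n)}$ is again visible. Then Theorem \ref{visact}(i) yields
\begin{equation*}
{\rm mod}(G_0:\overline{G_0\cd(\c_1+n)})={\rm mod}(G_0:\overline{G_0\cd(\c_1+n)}^{\rm reg}).
\end{equation*}

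Next I would identify the regular part. By Proposition \ref{propac}(iii), the packet $G_0\cd(\c_1+n)$ is dense and open in $\overline{G_0\cd(\c_1+n)}$, and by (iv) it lies in a single sheet of $\g_1$, so every $G_0$-orbit in it has the common dimension $d$. Since orbit dimension is lower-semicontinuous on the irreducible variety $\overline{G_0\cd(\c_1+n)}$, the maximum orbit dimension there is exactly $d$, attained on the open dense $G_0\cd(\c_1+n)\subseteq \overline{G_0\cd(\c_1+n)}^{\rm reg}$. By \eqref{mo} applied to the family $\overline{G_0\cd(\c_1+n)}^{\rm reg}$ and Proposition \ref{propac}(v),
\begin{equation*}
{\rm mod}(G_0:\overline{G_0\cd(\c_1+n)}^{\rm reg})=\dim\overline{G_0\cd(\c_1+n)}-d=(d+\dim\c_1)-d=\dim\c_1,
\end{equation*}
which combined with the previous display gives the claim.

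I do not foresee any substantial obstacle; the entire argument is an assembly of earlier results. The only point requiring a moment of care is the verification that the common orbit dimension $d$ on the packet coincides with the maximum orbit dimension on its closure, but this follows at once from Proposition \ref{propac}(iii)--(iv) and semicontinuity. As a consistency check, Theorem \ref{visact}(ii) would give the same answer via $\dim \overline{G_0\cd(\c_1+n)}/\!\!/G_0=\dim\c_1$, which is reassuring.
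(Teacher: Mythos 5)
Your proof is correct and follows the route the paper intends: the corollary is left unproved as an immediate consequence of Proposition \ref{propac}(iii)--(v) (which already places the packet inside $\overline{G_0\cd(\c_1+n)}^{\rm reg}$ and gives $\dim\overline{G_0\cd(\c_1+n)}=d+\dim\c_1$), combined with the modality-regularity of the restricted action coming from visibility via Theorem \ref{visact} and the proof of Theorem \ref{mr}. You have simply made that assembly explicit, so nothing further is needed.
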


\begin{lemma}\label{xreg}
$x\in (\z(\g^x)_1)^{\rm reg}$ for every $x\in \g_1$.
\end{lemma}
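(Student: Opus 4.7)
The statement is a quick formal consequence of the definitions, so my plan is to verify two things: (1) that $x$ actually belongs to $\z(\g^x)_1$, and (2) that among points of $\z(\g^x)_1$ the element $x$ has maximal $G_0$-orbit dimension (equivalently, minimal stabilizer dimension in $\g_0$).

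For (1), I would note that $x\in \g^x$ because $[x,x]=0$, and that $x$ commutes with every $z\in \g^x$ essentially by the definition of $\g^x$; hence $x\in (\g^x)^{\hskip -.2mm\g^x}=\z(\g^x)$. Combined with $x\in \g_1$, this gives $x\in \z(\g^x)_1$.

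For (2), the key observation is that whenever $y\in \z(\g^x)$, the element $y$ commutes with all of $\g^x$, so $\g^x\subseteq \g^y$, and after intersecting with $\g_0$ this yields $\g_0^x\subseteq \g_0^y$. Using the relation \eqref{G0}, which gives $\dim G_0\cd z=\dim \g_0-\dim \g_0^z$ for every $z\in \g$, the inclusion $\g_0^x\subseteq \g_0^y$ translates to $\dim G_0\cd x\geqslant \dim G_0\cd y$.

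Combining (1) and (2) with the definition \eqref{regreg} of the regular locus (applied to the $G_0$-action on $\g_1$ and to the locally closed irreducible subset $Z=\z(\g^x)_1$), we conclude that $x\in (\z(\g^x)_1)^{\rm reg}$. The only point that even requires a glance is that $\z(\g^x)_1$ is a linear subspace of $\g_1$ (so in particular locally closed and irreducible), which follows from the fact that $\g^x$ and hence $\z(\g^x)$ are $\theta$-stable because $x$ is homogeneous in the grading \eqref{gradC}. I do not foresee any genuine obstacle; the whole argument is a three-line unwinding of definitions plus \eqref{G0}.
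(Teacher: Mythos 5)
Your proposal is correct and follows essentially the same route as the paper: the core step in both is that $y\in \z(\g^x)$ forces $\g^x\subseteq \g^y$, hence $\g_0^x\subseteq \g_0^y$ and $\dim G_0\cd y\leqslant \dim G_0\cd x$, after which \eqref{regreg} gives the claim. The only differences are cosmetic — you verify $x\in\z(\g^x)_1$ explicitly and derive $\g^x\subseteq\g^y$ directly from the definition of the center, where the paper cites \cite[35.3.2]{TY05}.
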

\begin{proof}
If $y\in \z(\g^x)$, then $\g^x\subseteq \g^y$; see \cite[35.3.2]{TY05}.\;As $x$ is homogeneous, $\g^x$ is graded.\;If $y$ is homogeneous, $\g^y$ is graded as well, hence the specified inclusion yields $\g^x_i\subseteq \g^y_i$ for every $i$.\;In particular,
$\g_0^x\subseteq \g_0^y$ for every $y\in \z(\g^x)_1$, whence $\dim G_0\cd y\leqslant \dim G_0\cd x$ and, by
\eqref{regreg}, the claim.
\end{proof}

\begin{lemma}\label{nilpp}
Let $x\in \g_1$ be a nilpotent element.\;Then
$$G_0\cd x\cap \z(\g^x)_1=(\z(\g^x)_1)^{\rm reg}.$$
\end{lemma}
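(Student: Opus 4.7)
The containment $G_0 \cd x \cap \z(\g^x)_1 \subseteq (\z(\g^x)_1)^{\rm reg}$ is immediate from Lemma \ref{xreg}: any $y=g\cd x$ lying in $\z(\g^x)_1$ satisfies $\dim G_0\cd y=\dim G_0\cd x$, which by Lemma \ref{xreg} is the maximal orbit dimension attained on $\z(\g^x)_1$. For the reverse inclusion, the crucial technical step is the graded estimate
\begin{equation*}
\z(\g^w)_1 \subseteq [\g_0, w] \quad \text{for every nilpotent } w\in\g_1.
\end{equation*}

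To establish this estimate, my plan is to pick an $\mathfrak{sl}_2$-triple $(w,h,f)$ with $h\in\g_0$ and $f\in\g_{-1}$ (graded Jacobson--Morozov), let $\s\subset\g$ denote its linear span, and decompose $\g$ as an $\s\times\g^\s$-module into isotypic summands $V_\lambda\otimes W_\lambda$, where $W_\lambda$ is the multiplicity space of the $\s$-irreducible $V_\lambda$. Under the identification $\g^w\cong\bigoplus_\lambda W_\lambda$ via highest-weight vectors, any $z\in\z(\g^w)\cap\g^\s$ commutes with every $W_\lambda\subseteq\g^w$ and with $\s$, hence acts trivially on each $V_\lambda\otimes W_\lambda$, forcing $z\in\z(\g)=0$ by semisimplicity of $\g$. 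Since $\g^w\cap\ker(\operatorname{ad}h)=\g^\s$, this shows $\z(\g^w)$ lies in strictly positive $h$-weights. The invariance of $\langle\cdot\,{,}\,\cdot\rangle$ pairs $h$-weights $k$ and $-k$, so $\z(\g^w)\perp\g^w$, and combining with the standard identity $[\g_0,w]=(\g^w_{-1})^\perp$ inside $\g_1$ (from $\g_1$--$\g_{-1}$ duality) yields the estimate. A byproduct is that $\z(\g^w)$ consists of nilpotent elements, since $\operatorname{ad}z$ strictly raises $h$-weight.

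Returning to $y\in(\z(\g^x)_1)^{\rm reg}$, the byproduct gives $y$ nilpotent, and $\g^x\subseteq\g^y$ (from $y\in\z(\g^x)$) together with $\dim G_0\cd y=\dim G_0\cd x$ forces $\g^y=\g^x$ and hence $\z(\g^y)=\z(\g^x)$. I then consider
\begin{equation*}
\phi\colon G_0\times\z(\g^x)_1\to\g_1, \qquad (g,v)\mapsto g\cd v.
\end{equation*}
At any $(e,z)$ with $z\in(\z(\g^x)_1)^{\rm reg}$, applying the estimate to the nilpotent $z$ (for which $\z(\g^z)_1=\z(\g^x)_1\subseteq[\g_0,z]$) gives $\operatorname{rank}d\phi_{(e,z)}=\dim([\g_0,z]+\z(\g^x)_1)=\dim[\g_0,z]=\dim G_0\cd x$. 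By $G_0$-equivariance and lower-semicontinuity of the rank, this is the generic (hence maximal) rank, so $\dim\phi(G_0\times\z(\g^x)_1)=\dim G_0\cd x$. The image is irreducible, $G_0$-stable, and contains $G_0\cd x$ of the same dimension, so it is contained in $\overline{G_0\cd x}$. Thus $y\in\overline{G_0\cd x}$ has orbit dimension $\dim G_0\cd x$, and since $G_0\cd x$ is the unique orbit of this dimension in its closure, $y\in G_0\cd x$.

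The main obstacle is the graded estimate $\z(\g^w)_1\subseteq[\g_0,w]$; once it is in hand, the rest is routine dimension-theoretic bookkeeping using Lemma \ref{xreg}. The delicate point is verifying $\z(\g^w)\cap\g^\s=0$, which crucially uses that $\g$ is semisimple (in a merely reductive setting the center of $\g$ would intervene and the conclusion would fail).
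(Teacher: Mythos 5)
Your route is genuinely different from the paper's.\;The paper's proof rests on the finiteness of nilpotent $G_0$-orbits in $\g_1$ (\cite[Prop.\;2]{V76}): since all elements of $\z(\g^x)$ are nilpotent, a single orbit $\mathcal O$ must meet $\z(\g^x)_1$ in a dense open subset, and Lemma \ref{xreg} then forces $\mathcal O=G_0\cd x$ and $\mathcal O\cap \z(\g^x)_1=(\z(\g^x)_1)^{\rm reg}$ by pure dimension bookkeeping.\;Your graded estimate $\z(\g^w)_1\subseteq[\g_0,w]$ is correct and correctly proved (and as a byproduct it reproves, rather than cites, the nilpotency of $\z(\g^w)$), but it makes the argument considerably longer and imports the graded Jacobson--Morozov theorem, which the paper's proof avoids entirely.

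There is, however, one step that does not hold up as written: ``$\g^x\subseteq\g^y$ together with $\dim G_0\cd y=\dim G_0\cd x$ forces $\g^y=\g^x$.''\;Equality of $G_0$-orbit dimensions only gives $\dim\g_0^y=\dim\g_0^x$, hence $\g_0^y=\g_0^x$; it says nothing about the components $\g_i^y$ for $i\neq 0$.\;(Via the duality $[\g_i,z]^\perp\cap\g_{-i-1}=\g_{-i-1}^z$, the quantity $\dim\g_0^z$ controls $\dim\g_{-1}^z$ but, for $m\geqslant 3$, not $\dim\g_1^z$, so the inference fails in general.)\;This matters downstream: since $z\in\z(\g^x)$ gives $\z(\g^z)\subseteq\z(\g^x)$ --- the wrong direction --- your estimate applied to $z$ yields $\z(\g^z)_1\subseteq[\g_0,z]$ but not the inclusion $\z(\g^x)_1\subseteq[\g_0,z]$ that your rank computation at $(e,z)$ actually needs.\;The repair is available from what you already have: for $z\in(\z(\g^x)_1)^{\rm reg}$ one has $\g_{-1}^x\subseteq\g_{-1}^z$, hence $[\g_0,z]\subseteq[\g_0,x]$ by the duality, and these two spaces have equal dimension ($=\dim G_0\cd z=\dim G_0\cd x$), so $[\g_0,z]=[\g_0,x]$; now the estimate applied to $x$ itself gives $\z(\g^x)_1\subseteq[\g_0,x]=[\g_0,z]$, and the rest of your argument goes through.\;With that substitution the proof is correct.
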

\begin{proof} By by \eqref{regreg} and Lemma \ref{xreg}, for every $z\in \z(\g^x)_1$, we have
\begin{equation}\label{dime}
z\in (\z(\g^x)_1)^{\rm reg} \iff \dim G_0\cd z=\dim G_0\cd x.
\end{equation}

As $x$ is nilpotent and $\g$ is semisimple, all elements of $\z(\g^x)$ are nilpotent (see
\cite[35.1.2]{TY05}).\;Therefore $\z(\g^x)_1$ lies in the variety of nilpotent elements of  $\g_1$.\;By \cite[Prop.\;2]{V76}, this variety is the union of finitely many $G_0$-orbits.\;From this we infer that
among them there is an orbit $\mathcal O$ such that  $\mathcal O\cap \z(\g^x)_1$ is a dense open subset of $\z(\g^x)_1$.\;This and \eqref{dime} entail that
\begin{align}\label{dimO=}
\overline{\mathcal O\cap \z(\g^x)_1}&=\z(\g^x)_1,\\
\mathcal O\cap \z(\g^x)_1&\subseteq (\z(\g^x)_1)^{\rm reg},\label{dimOsub}\\
\dim \mathcal O&=\dim G_0\cd x.\label{dim=dim}
\end{align}

If $z\in (\z(\g^x)_1)^{\rm reg }$, then  \eqref{dime}, \eqref{dim=dim} yield $\dim G_0\cd z=\dim \mathcal O$. Since, by \eqref{dimO=}, $z\in\overline{\mathcal O}$, the latter equality implies that
$G_0\cd z =\mathcal O$.\;This equality and \eqref{dimOsub} yield
\begin{equation}\label{===}
\mathcal O\cap \z(\g^x)_1= (\z(\g^x)_1)^{\rm reg}.
\end{equation}

From \eqref{===} and Lemma \ref{xreg} we now infer that $\mathcal O=G_0\cd x$.\;This equality and \eqref{===} complete the proof.
\end{proof}

\begin{proposition}\label{clapac}
For all $x, y\in \g_1$, the following conditions are equi\-va\-lent:
\begin{enumerate}[\hskip 4.2mm\rm(i)]
\item $x$ and $y$ lie in the same packet in $\g_1$.
\item There exists $g\in G_0$ such that $\g^y=g\cd {\g^x}$.
\item There exists $g\in G_0$ such that $\z(\g^y)=g\cd \z(\g^x)$.
\end{enumerate}
\end{proposition}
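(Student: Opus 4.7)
The implications (i)$\Rightarrow$(ii)$\Rightarrow$(iii) are straightforward. For (i)$\Rightarrow$(ii), Proposition \ref{disj} supplies $g\in G_0$ with $\g^{g\cd x_{\rm s}}=\g^{y_{\rm s}}$ and $g\cd x_{\rm n}=y_{\rm n}$, whence $g\cd \g^x=g\cd(\g^{x_{\rm s}}\cap\g^{x_{\rm n}})=\g^{y_{\rm s}}\cap\g^{y_{\rm n}}=\g^y$. For (ii)$\Rightarrow$(iii), the center of a Lie algebra is preserved by Lie algebra automorphisms.

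The real work is in (iii)$\Rightarrow$(i). Replacing $x$ by $g\cd x$ (which stays in the same $G_0$-orbit, hence in the same packet as $x$), we may assume $\z(\g^x)=\z(\g^y)=:\z$. The plan is to verify condition (J) of Proposition \ref{disj}: first show $\g^{x_{\rm s}}=\g^{y_{\rm s}}$, then exhibit $h\in G_0$ with $h\cd x_{\rm n}=y_{\rm n}$ that preserves $\g^{x_{\rm s}}$, by taking $h$ from the connected subgroup $S\subset G_0$ with ${\rm Lie}\,S=\g^{x_{\rm s}}_0$ (so that $S$ centralizes $x_{\rm s}$ and hence $h\cd \g^{x_{\rm s}}=\g^{x_{\rm s}}=\g^{y_{\rm s}}$).

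For $\g^{x_{\rm s}}=\g^{y_{\rm s}}$, the reductive structure of $\g^{x_{\rm s}}$ together with $x_{\rm n}\in\g^{x_{\rm s}}$ yields
\[
\g^x=\g^{x_{\rm s}}\cap\g^{x_{\rm n}}=\z(\g^{x_{\rm s}})\oplus \g_{[ss]}^{x_{\rm n}},\qquad \g_{[ss]}:=[\g^{x_{\rm s}},\g^{x_{\rm s}}],
\]
and taking centers gives $\z(\g^x)=\z(\g^{x_{\rm s}})\oplus \z(\g_{[ss]}^{x_{\rm n}})$. The first summand is toral, while the second consists of nilpotent elements by the classical fact cited in the proof of Lemma \ref{nilpp}. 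Hence the semisimple locus of $\z$ equals $\z(\g^{x_{\rm s}})$, and symmetrically $\z(\g^{y_{\rm s}})$, so these are equal. The identity $\g^{x_{\rm s}}=\g^{\z(\g^{x_{\rm s}})}$ (one inclusion from $x_{\rm s}\in \z(\g^{x_{\rm s}})$, the other from the definition of the center) then delivers $\g^{x_{\rm s}}=\g^{y_{\rm s}}$; in particular, $\g_{[ss]}$ is the same graded semisimple subalgebra for $x$ and $y$.

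For the nilpotent step, $\g_{[ss]}$ inherits a $\mathbf{Z}_m$-grading from $\theta$, and the $S$-action on $\g_{[ss]}$ factors through a connected group whose degree-zero Lie algebra matches $(\g_{[ss]})_0$, so Lemmas \ref{xreg} and \ref{nilpp} apply inside $\g_{[ss]}$ with $S$ playing the role of $G_0$. The decomposition of $\z(\g^x)=\z(\g^y)$ from the previous paragraph forces $\z(\g_{[ss]}^{x_{\rm n}})=\z(\g_{[ss]}^{y_{\rm n}})$, and the two lemmas then yield
\[
x_{\rm n}\in \big(\z(\g_{[ss]}^{x_{\rm n}})_1\big)^{\rm reg}=\big(\z(\g_{[ss]}^{y_{\rm n}})_1\big)^{\rm reg}=S\cd y_{\rm n},
\]
furnishing the required $h\in S$. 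The main obstacle is the first step, specifically isolating $\z(\g^{x_{\rm s}})$ as the semisimple part of $\z(\g^x)$ via the direct-sum decomposition; once that is in hand, the nilpotent step reduces directly to the two lemmas on regular points and nilpotent orbits.
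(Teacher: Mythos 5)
Your proposal is correct and follows essentially the same route as the paper: the paper also proves (iii)$\Rightarrow$(i) by splitting $\z(\g^x)=\z(\g^{x_{\rm s}})\oplus \z\big([\g^{x_{\rm s}},\g^{x_{\rm s}}]^{x_{\rm n}}\big)$ into its semisimple and nilpotent loci (citing \cite[39.1.1]{TY05} where you derive it inline), recovering $\g^{x_{\rm s}}$ from its center, and then applying Lemmas \ref{xreg} and \ref{nilpp} inside the graded semisimple algebra $[\g^{x_{\rm s}},\g^{x_{\rm s}}]$ to conjugate the nilpotent parts before invoking Proposition \ref{disj}. Your only deviations are cosmetic: you establish $\g^{x_{\rm s}}=\g^{y_{\rm s}}$ before the nilpotent step rather than after, and you take the auxiliary subgroup with Lie algebra $\g^{x_{\rm s}}_0$ instead of $[\g^{x_{\rm s}},\g^{x_{\rm s}}]_0$, which gives the same orbits since the center acts trivially.
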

\begin{proof} The implication (ii)$\Rightarrow$(iii) is clear.

Assume that (iii) holds.\;Then $\z(\g^y)=\z(\g^{g\cdot x})$, which implies that
$\g^y=\g^{g\cdot x}=g\cd \g^x$; see \cite[35.3.2]{TY05}. This proves
the implication (iii)$\Rightarrow$(ii).

Assume that (i) holds.\;Then by Proposition \ref{disj} there exists $g\in G_0$ such that
\eqref{JJJ} holds.\;Therefore
\begin{equation*}
\g^y=\g^{y_{\rm s}}\cap \g^{y_{\rm s}}=g\cd \g^{x_{\rm s}}\cap g\cd \g^{x_{\rm n}}=
g\cd (\g^{x_{\rm s}}\cap \g^{x_{\rm n}})=g\cd \g^x.
\end{equation*}
This proves
the implication (i)$\Rightarrow$(ii).

Finally, let us prove that $x$ and $y$ lie same packet in $\g_1$  assuming that (iii) holds.
Since packets are $G_0$-stable, we may (and shall) assume that $g$ is the identity element,
i.e.,
\begin{equation}\label{g=e}
\z(\g^x)=\z(\g^y).
\end{equation}

 According to \cite[39.1.1]{TY05},
 \begin{equation}\label{z+z}
 \begin{split}
 \z(\g^x)&=\z(\g^{x_{\rm s}})\oplus \z\big([\g^{x_{\rm s}}, \g^{x_{\rm s}}]^{x_{\rm n}}\big),\\
 \z(\g^y)&=\z(\g^{y_{\rm s}})\oplus \z\big([\g^{y_{\rm s}}, \g^{y_{\rm s}}]^{y_{\rm n}}\big),
 \end{split}
 \end{equation}
 and the first (respectively, second) direct summand in each of the equalities \eqref{z+z} is the set of all semisimple (respectively, nilpotent) elements of the left-hand side of this equality.\;This and \eqref{g=e} then entail
 \begin{equation}\label{ceco}
 \begin{split}
 \z(\g^{x_{\rm s}})&=\z(\g^{y_{\rm s}}),\\
 \z\big([\g^{x_{\rm s}}, \g^{x_{\rm s}}]^{x_{\rm n}}\big)&=\z\big([\g^{y_{\rm s}}, \g^{y_{\rm s}}]^{y_{\rm n}}\big).
 \end{split}
 \end{equation}

 Since $x$ and $y$ are homogeneous elements, the left- and right-hand sides of \eqref{ceco} are
 the graded subalgebras of $\g$.\;The subalgebras
 $[\g^{x_{\rm s}}, \g^{x_{\rm s}}]$, $[\g^{y_{\rm s}}, \g^{y_{\rm s}}]$ are semisimple and $x_{\rm n}\in [\g^{x_{\rm s}}, \g^{x_{\rm s}}]_1$, $y_{\rm n}\in [\g^{y_{\rm s}}, \g^{y_{\rm s}}]_1$.

 Let $H$ be the connected closed sub\-group of $G_0$ such that ${\rm Lie}\,H=
 [\g^{x_{\rm s}}, \g^{x_{\rm s}}]_0$. Applying Lemma \ref{nilpp}, in which $x$ and $\g$ are replaced respectively by $x_{\rm n}$ and $[\g^{x_{\rm s}}, \g^{x_{\rm s}}]$, and
 taking \eqref{ceco} into account, we obtain:
 \begin{equation}\label{rrrr}
 H\cd x_{\rm n}\cap \z\big([\g^{x_{\rm s}}, \g^{x_{\rm s}}]^{x_{\rm n}}\big)_1=
 \big(\z\big([\g^{x_{\rm s}}, \g^{x_{\rm s}}]^{x_{\rm n}}\big)_1\big)^{\rm reg}
 \overset{\eqref{ceco}}{=\hskip -1mm=}
 \big(\z\big([\g^{y_{\rm s}}, \g^{y_{\rm s}}]^{y_{\rm n}}\big)_1\big)^{\rm reg},
 \end{equation}
 where the notation $X^{\rm reg}$ refers to the action of
 $H$ on $[\g^{x_{\rm s}}, \g^{x_{\rm s}}]_1$.

 By Lemma \ref{xreg}, the element $y_{\rm n}$ lies in the right-hand side of \eqref{rrrr}.\;Therefore \eqref{rrrr} implies that there exists $g\in H$ such that
 \begin{equation}\label{ginH}
 g\cd x_{\rm n}=y_{\rm n}.
 \end{equation}

 Since $[\g^{x_{\rm s}}, \g^{x_{\rm s}}]$ and $\z(\g^{x_{\rm s}})$ commute, $H$ acts trivially on
 $\z(\g^{x_{\rm s}})$.\;This and the first equality in \eqref{ceco} yield
 \begin{equation}\label{paccc}
\z(\g^{g\cdot x_{\rm s}})= g\cd \z(\g^{x_{\rm s}})=\z(\g^{x_{\rm s}})\overset{\eqref{ceco}}{=\hskip -1mm=}\z(\g^{y_{\rm s}}).
 \end{equation}
From \eqref{paccc}, using \cite[35.3.2]{TY05},  we infer that
\begin{equation}\label{fi}
\g^{g\cdot x_{\rm s}}=\g^{y_{\rm s}}.
\end{equation}

By Proposition \ref{disj}, the equalities \eqref{ginH} and \eqref{fi} imply (i), thereby
comp\-leting the proof of implication (iii)$\Rightarrow$(i).
\end{proof}

Using Proposition \ref{clapac}, we now can show that in fact a property stronger than Proposition \ref{propac}(iii) holds:

\begin{proposition}\label{lc} Every packet $P$ in $\g_1$ is open in its closure in $\g_1$.
\end{proposition}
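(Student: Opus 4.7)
The plan is to apply the algebraic constant-rank theorem to the $G_0$-equivariant morphism $\varphi: G_0 \times \c_1 \to \g_1$, $(g, c) \mapsto g \cdot (c + n)$ from \eqref{phi} in the proof of Proposition \ref{propac}(v), whose image is precisely $P$. The source is smooth and irreducible (by Corollary \ref{cellvariety} and smoothness of $G_0$), and $\g_1$ is smooth; hence, if one can show that $d\varphi$ has constant rank $\dim \overline P$ everywhere on $G_0 \times \c_1$, then locally around each source point the image will be forced to be a locally closed subvariety of $\g_1$ of dimension $\dim \overline P$, which will yield the openness of $P$ in $\overline P$.

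First I would verify the constant-rank property. By $G_0$-equivariance, the image of $d\varphi_{(g,c)}$ equals $g \cdot ([\g_0, c+n] + \overline{\c_1})$. By Proposition \ref{propac}(ii), the subspace $[\g_0, c+n]$ is independent of $c \in \c_1$ and has dimension $d$, the common dimension of $G_0$-orbits in $P$. By equation \eqref{caps} from the proof of Proposition \ref{propac}(v), one has $[\g_0, c+n] \cap \overline{\c_1} = 0$. Hence $d\varphi_{(g,c)}$ has rank $d + \dim \c_1 = \dim \overline P$ at every point, which is the constant-rank hypothesis.

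The constant-rank theorem for morphisms of smooth varieties in characteristic zero then gives, for each $(g, c) \in G_0 \times \c_1$, an open neighborhood $W \ni (g, c)$ such that $\varphi(W)$ is a smooth locally closed subvariety of $\g_1$ of dimension $\dim \overline P$, with $\varphi|_W$ a smooth surjection onto $\varphi(W)$. Since $\varphi(W) \subseteq P \subseteq \overline P$ and $\dim \varphi(W) = \dim \overline P$, the Zariski closure of $\varphi(W)$ in $\g_1$ is an irreducible subvariety of $\overline P$ of maximal dimension, so it equals $\overline P$; thus $\varphi(W)$ is open in $\overline P$, contains $\varphi(g,c)$, and is contained in $P$. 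Because $\varphi$ is surjective onto $P$ and $(g, c)$ was arbitrary, every point of $P$ has an open neighborhood in $\overline P$ contained in $P$, proving that $P$ is open in $\overline P$. The main point requiring care is invoking the constant-rank theorem in the algebraic category, which in characteristic zero is a standard consequence of the algebraic inverse function theorem together with the smoothness of source and target.
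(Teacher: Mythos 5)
Your rank computation is correct: at every point of $G_0\times\c_1$ the image of $d\varphi$ is a $G_0$-translate of $[\g_0,c+n]+\overline{\c_1}$, which by Proposition \ref{propac}(ii) and \eqref{caps} has dimension $d+\dim\c_1$ independently of the point; this is exactly the computation the paper performs in the proof of Proposition \ref{propac}(v) at one well-chosen point. The gap is the appeal to a ``constant-rank theorem for morphisms of smooth varieties.'' No such theorem holds in the Zariski topology, and the statement you extract from it is false: there is no algebraic inverse function theorem (an \'etale morphism is not a Zariski-local isomorphism), and the image of a Zariski-open set under a morphism of constant rank is in general only constructible, not locally closed. A concrete counterexample: let $Y$ be the affine plane with the two points $(0,0)$ and $(1,1)$ identified, $\mu\colon {\mathbf A}^2\to Y$ the gluing map, $C=\{x=0\}$, and $X=\{x\neq 0\}$. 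Then $\mu|_X$ has everywhere injective differential (constant rank $2$), yet $\mu(X)=(Y\setminus\mu(C))\cup\{\bar P\}$, where $\bar P$ is the glued point, is not open in its closure $Y$; the same happens for every Zariski neighbourhood of the point lying over $\bar P$. If instead you invoke the genuine (analytic-local) constant-rank theorem, you only get that near $\varphi(g,c)$ the image contains a germ of a smooth $(d+\dim\c_1)$-dimensional submanifold of $\overline{P}$; such a germ is a neighbourhood of the point in $\overline{P}$ only when $\overline{P}$ is unibranch there, which is not established and can fail for a singular $\overline P$ --- the germ may be a single local branch, which is not open in $\overline P$. Your closing inference ``the Zariski closure of $\varphi(W)$ equals $\overline P$, thus $\varphi(W)$ is open in $\overline P$'' presupposes that $\varphi(W)$ is locally closed, which is precisely the point at issue; a dense constructible subset of maximal dimension need not be open. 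What your argument does legitimately yield is that $P$ contains a dense open subset of $\overline P$ --- but that is Proposition \ref{propac}(iii), already proved, and strictly weaker than the claim.

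The difficulty is genuinely global rather than infinitesimal: the rank of $d\varphi$ is constant only because the domain was cut down to the cell $\c_1$, an open subset of the linear space $\overline{\c_1}$ on whose boundary the rank drops, and a local computation cannot detect whether the points of $P$ lying over deeper strata are reached by all local branches of $\overline P$. The paper's proof avoids this entirely: using Proposition \ref{clapac} it writes $P=\g_1\cap\varphi^{-1}(G_0\cd\varphi(x))$ for the morphism $\varphi\colon x\mapsto\g^x$ into a Grassmannian, and then quotes the fact that an orbit of an algebraic group is open in its closure. To repair your approach you would need an input of this global kind --- for instance the orbit description of $P$, or a proof that $\overline P$ is unibranch along $P$ --- none of which follows from the constancy of the rank alone.
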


\begin{proof} By Proposition \ref{propac}, there exists an integer $d$ such that $P$
lies in the locally closed subset $X=\{x\in \g\mid \dim \g^x=d\}$ of $\g$.\;Let ${\rm Grass}(\g, d)$ be the Grassmannian of $d$-dimensional linear subspaces in $\g$ endowed with the natural action of $G$.\;The map
\begin{equation*}
\varphi\colon X\to {\rm Grass}(\g, d),\quad x\mapsto \g^x.
\end{equation*}
is $G$-equivariant.\;Let $x\in P$. Then Proposition \ref{clapac} implies that
\begin{equation}\label{PG0}
P=\g_1\cap \varphi^{-1}(G_0\cd \varphi(x)).
\end{equation}

Since $\varphi$
is a morphism  (see \cite[19.7.6 and 29.3.1]{TY05}) and orbits of algebraic transformation groups are open in their closures, \eqref{PG0} shows that $P$ enjoys the latter property
 as well.
\end{proof}

\subsection{} We conclude with describing the relationship between the sheets and the pa\-ckets in $\g_1$.
Below bar stands for the closure in $\g_1$.

\begin{proposition}\label{shee} For every sheet $S$  of $\,\g_1$, there is a unique packet
$P$ in $\g_1$ such that $P\subseteq S$ and $\overline P=\overline S$.\;Moreover,
$S={\overline P}^{\rm reg}$.
\end{proposition}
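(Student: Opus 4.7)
The plan is to identify the required packet $P$ as the unique one whose intersection with $S$ is dense in $S$, and then to invoke maximality of the sheet to pin down $\overline P$.

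First I will apply Proposition~\ref{cover}, Proposition~\ref{pf}, and Corollary~\ref{disjo} to write $S$ as a finite disjoint union $S=\bigsqcup_{i\in I}(P_i\cap S)$ of intersections with packets. Since $S$ is irreducible and the union is finite, one summand $P\cap S$ is dense in $S$; because $P$ is locally closed in $\g_1$ by Proposition~\ref{lc}, the set $P\cap S$ is locally closed in the irreducible variety $S$, and a dense locally closed subset of an irreducible variety is open. Hence $P\cap S$ is open and dense in $S$, so $\overline S=\overline{P\cap S}\subseteq\overline P$. Let $d$ denote the common dimension of the $G_0$-orbits in $S$; by Proposition~\ref{propac}(iv) the packet $P$ is contained in some sheet, and its orbit dimension equals $d$ because $P\cap S\neq\varnothing$.

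The main step is to upgrade $\overline S\subseteq\overline P$ to an equality via the maximality of $S$. The subset $\overline P^{\rm reg}$ is open in the irreducible variety $\overline P$, hence is irreducible, $G_0$-stable, and locally closed in $\g_1$. Since $P$ is open and dense in $\overline P$ by Proposition~\ref{lc} and every orbit in $P$ has dimension $d$, the maximum orbit dimension on $\overline P$ is $d$, so every orbit in $\overline P^{\rm reg}$ has dimension $d$ as well, making $\overline P^{\rm reg}$ a family in the sense of Section~\ref{secmo}. From $S\subseteq\overline S\subseteq\overline P$ combined with the fact that each orbit in $S$ has dimension $d$, we obtain $S\subseteq\overline P^{\rm reg}$. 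Maximality of the sheet $S$ now forces $S=\overline P^{\rm reg}$, whence $\overline S=\overline{\overline P^{\rm reg}}=\overline P$; both $P\subseteq\overline P^{\rm reg}=S$ and the ``moreover'' assertion are then immediate.

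For uniqueness, suppose $P'$ is another packet with $P'\subseteq S$ and $\overline{P'}=\overline S$. Then by Proposition~\ref{lc} both $P$ and $P'$ are nonempty open subsets of the irreducible variety $\overline S=\overline P=\overline{P'}$; hence $P\cap P'\neq\varnothing$, and Corollary~\ref{disjo} forces $P=P'$. The only genuinely delicate moment in the argument is recognizing that $\overline P^{\rm reg}$ is itself a family containing $S$, after which the maximality built into the definition of a sheet does the work; the rest is formal bookkeeping with closures and orbit dimensions.
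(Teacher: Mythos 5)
Your proof is correct, but it takes a genuinely different route from the paper's. The paper fixes the orbit dimension $d$ of $S$, writes the whole level set $\{x\in\g_1\mid \dim G_0\cd x=d\}$ simultaneously as a finite union of sheets and as a finite union of packets, passes to closures, and matches the two collections of irreducible closed sets by a permutation, reading off the statement from $\overline{S_i}=\overline{P_{\sigma(i)}}$ together with $S=\overline{S}^{\rm reg}$. You instead argue sheet by sheet: you extract the packet $P$ whose trace on $S$ is dense, deduce $\overline S\subseteq\overline P$, check that $\overline P^{\rm reg}$ is a family containing $S$, and let the maximality built into the definition of a sheet force $S=\overline P^{\rm reg}$ and hence $\overline S=\overline P$, with uniqueness supplied by Proposition~\ref{lc} and Corollary~\ref{disjo}. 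Your localization is in fact more robust than the paper's matching step, which rests on the assertion that $\overline{P_i}\nsubseteq\overline{P_j}$ for distinct packets of the same orbit dimension; that assertion is delicate, since closures of distinct packets with equal orbit dimension can be nested (already for the adjoint representation of ${\rm SL}_2$ the closure of the regular nilpotent packet, the nilpotent cone, sits inside the closure of the regular semisimple packet, which is all of $\g$), whereas your appeal to sheet maximality sidesteps this point entirely. One minor bookkeeping remark: the constancy of orbit dimension along a packet, which you attribute to Proposition~\ref{propac}(iv), is really extracted from Proposition~\ref{propac}(i) (it is the observation from which (iv) is derived in the paper); this does not affect your argument.
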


\begin{proof} If $d$ is the dimension of $G_0$-orbits in $S$, then  by Propositions \ref{cover}, \ref{propac} and in view of the definition of a sheet, we have
$$\{x\in \g_1\mid \dim G_0(x)=d\}=\textstyle \bigcup_{i=1}^{p} S_i=\bigcup_{j=i}^{q}P_j,$$
where $S_1,\ldots, S_p$  and $P_1,\ldots, P_q$ are, respectively, some pairwise different sheets and
packets in $\g_1$; the sheet $S$ is one of $S_1,\ldots, S_m$.
Whence
\begin{equation}\label{SPcl}
\textstyle \bigcup_{i=1}^{p} \overline{S_i}=\bigcup_{j=i}^{q}\overline{P_j}.
\end{equation}

As sheets and packets contain open subsets of their closures, we have $\overline {S_i} \nsubseteq \overline{S_j}$ and $\overline {P_i}  \nsubseteq \overline{P_j}$ for all $i\neq j$.
Since all $\overline {S_i}$ and  $\overline{P_j}$ are irreducible,
\eqref{SPcl} then implies that $p=q$ and there is a permutation $\sigma$ of $\{1,\ldots, p\}$ such that $\overline {S_i}=\overline {P_{\sigma(i)}}$.

The last claim of this proposition follows from the definition of a regular sheet.
\end{proof}

\end{document}